\documentclass{birkjour}
\usepackage{amssymb,anysize,float,booktabs,helvet,paralist}
\usepackage[shortlabels]{enumitem}
\usepackage{hyperref}
\hypersetup{citecolor=red, linkcolor=blue, colorlinks=true}


\newtheorem{theorem}{Theorem}[section]
\newtheorem{lemma}[theorem]{Lemma}
\newtheorem{proposition}[theorem]{Proposition}
\newtheorem{corollary}[theorem]{Corollary}
\newtheorem{remark}[theorem]{Remark}
\newtheorem{example}[theorem]{Example}
\newtheorem{definition}[theorem]{Definition}

\renewcommand\ge{\geqslant}
\renewcommand\le{\leqslant}

\DeclareMathOperator\p{\mathcal{P}}
\DeclareMathOperator\gf{\mathrm{GF}}
\DeclareMathOperator\pg{\mathrm{PG}}
\DeclareMathOperator\h{\mathsf{H}}
\DeclareMathOperator\w{\mathsf{W}}
\DeclareMathOperator\q{\mathsf{Q}}
\DeclareMathOperator\cO{\mathcal{O}}
\DeclareMathOperator\cS{\mathcal{S}}
\DeclareMathOperator\cI{\mathcal{I}}
\DeclareMathOperator\cL{\mathcal{L}}
\DeclareMathOperator\cP{\mathcal{P}}
\DeclareMathOperator\setC{\mathbb{C}}
\DeclareMathOperator\cW{\mathcal{W}}
\DeclareMathOperator\cQ{\mathcal{Q}}
\newcommand\posev{r}
\newcommand\negev{s}

\setlist[description]{leftmargin=1ex,font=\normalfont\bfseries\space,style=nextline}

\title{New non-existence proofs for ovoids of Hermitian polar spaces and hyperbolic quadrics} 

\author{John Bamberg}
\address{ %
Centre for the Mathematics of Symmetry and Computation\\
School of Mathematics and Statistics\\
The University of Western Australia\\
35 Stirling Highway, Crawley, W.A. 6009, Australia.}
\email{John.Bamberg@uwa.edu.au}

\author{Jan De Beule}
\address{Department of Mathematics,\\
Ghent University,\\
Krijgslaan 281 - S22,\\
9000 Ghent,\\
Belgium.}
\email{jdebeule@cage.ugent.be}

\author{Ferdinand Ihringer}
\address{ %
Mathematisches Institut,\\
Justus Liebig University Giessen,\\
Arndtstra\ss{}e 2,\\
35392 Giessen,\\
Germany.}
\email{Ferdinand.Ihringer@math.uni-giessen.de}

\subjclass{Primary 05B25, 51E20, 51A50}
\keywords{Finite classical polar space, ovoid, tight set}
\thanks{John Bamberg was supported by an Australian Research Council Future Fellowship FT120100036.\\ 
Jan De Beule is a postdoctoral fellow of the Research Foundation Flanders -- FWO (Belgium), and acknowledges 
the Research Foundation Flanders -- FWO (Belgium) for a travel grant.}


\begin{document}

\begin{abstract}
We provide new proofs for the non-existence of ovoids in hyperbolic spaces of rank at least four in
even characteristic, and for the Hermitian polar space $\h(5, 4)$. We also improve the results of A.
Klein on the non-existence of ovoids of Hermitian spaces and hyperbolic quadrics.
\end{abstract}

\maketitle

\section{Introduction}

Finite polar spaces (of which finite classical polar spaces are a subclass), are the natural
geometries for the finite classical simple groups. The main geometric property of polar spaces, as
shown by Buekenhout and Shult \cite{Buekenhout:1974kx}, is the so-called \emph{one or all axiom}. A
polar space is a point-line geometry\footnote{in fact, a partial linear space} with the property
that if $P$ is a point and $\ell$ a line not incident with $P$, then $P$ is collinear with either
all points of $\ell$ or with exactly one point of $\ell$. Polar spaces are some of the most
important examples in the theory of incidence geometries and provide a rich class of spherical
buildings. We will be interested in finite classical polar spaces which arise from equipping a
finite vector space with a sesquilinear or quadratic form, and they are the only finite polar spaces
of rank at least $3$ \cite{Tits:1974ys}.

According to Dembowski \cite[footnote page 48]{Dembowski:1997fj}, an \emph{ovoid} of a projective
space is defined for the first time by Tits in \cite{Tits:1962vn}, where Tits began his introduction
with reference to the works of Barlotti and Segre (\cite{Barlotti:1955yq} and
\cite{Segre:1959,Segre:1959rz}) on arcs and caps of finite projective spaces. The connection with
polar spaces is due to Tits' geometric construction of the Suzuki groups $\,^2B_2(2^{2h+1})$ (see
also \cite{Tits:1962vn}), where Tits' ovoids can also be realised in a second sense, as
\emph{ovoids} of a particular finite classical polar space (i.e., rank $2$ symplectic spaces).
Furthermore, the Ree groups $\,^2G_2(3^{2h+1})$ can also be naturally constructed from ovoids of a
certain family of finite classical polar spaces (i.e., rank $3$ parabolic quadrics). In 1974, J. A.
Thas \cite{Tits:1974ys} synthesised these objects as ovoids of finite polar spaces; a set $\cO$ of
points such that every generator meets $\cO$ in exactly one point.

To use the words of J. A. Thas \cite{Thas:2001ly}, ovoids of finite polar spaces have ``many
connections with and applications to projective planes, circle geometries, generalised polygons,
strongly regular graphs, partial geometries, semi-partial geometries, codes, designs". In
\cite{Thas:1981kx}, the existence and non-existence of ovoids of finite polar spaces was first
investigated, and a striking pattern emerges when one looks to the known examples: the existence of
ovoids of finite polar spaces simulates the results of Tits for ovoids of projective spaces. That
is, ovoids seem to exist only when the rank of the geometry\footnote{The rank of a geometry is
simply its number of types of object.} is small. Shult \cite[\S2.2]{Shult:2005yu} all but
conjectures that if a finite polar space of rank $r$ possesses an ovoid, then $r\le 4$. Thas
\cite{Thas:1981kx} showed that this property holds true for the symplectic and elliptic polar
spaces, but also for Hermitian polar spaces in even dimension. The best known bounds on ovoids of
Hermitian spaces and hyperbolic quadrics are due to Blokhuis and Moorhouse
\cite{Blokhuis1995,Moorhouse1996}. For instance, if $p$ is prime and $q=p^h$, there are no ovoids of
$\q^{+}(2r+1,q)$ when $p^{r}>\binom{2r+p}{2r+1}-\binom{2r+p-2}{2r+1}$. Thus for $p=2$ or $p=3$, we
see that no ovoids exist for $r\ge 4$. Cooperstein \cite{Cooperstein:1995aa} gave another proof that
$\q^{+}(9,q)$ has no ovoids for $q$ even by relating the existence of an ovoid in the quadric to the
maximum size of a cap in a symplectic polar space. We give a geometric proof for the non-existence
of ovoids of $\q^+(9,q)$, $q$ even (Section \ref{sec:q+9}), that utilises a direct parity argument
in the hyperbolic quadric.

For Hermitian spaces $\h(2r-1,q^2)$ in projective spaces of odd dimension, Moorhouse
\cite{Moorhouse1996} proved that no ovoids exist when
\[
p^{2r-1}>\binom{p+2r-2}{2r-1}^2-\binom{p+2r-3}{2r-1}^2.
\]
Again, this bound implies that for $r\ge 4$, $\h(2r-1,q^2)$ has no ovoids if $q\in\{2,3\}$. Using a
combinatorial argument in an inductive way, Klein \cite{Klein2004} proved that no ovoid of
$\h(2r-1,q^2)$ can exist for $r>q^3+1$, which although not as powerful as the Moorhouse bound, has a
simpler proof and provides greater insight into the existence of ovoids of Hermitian spaces. Using a
particular tight set we provide a stronger induction base for Klein's argument, that improves his
bound to $r > q^3-q^2+2$ (Section \ref{sec:AKlein}).

The only result known for the non-existence of ovoids of Hermitian polar spaces with rank less than
$4$ is the theorem of De Beule and Metsch \cite{DeBeule2005} that $\h(5,4)$ has no ovoid. We provide
a proof for the non-existence of ovoids of $\h(5,4)$ (Section \ref{sec:h54}) based on the use of
particular intriguing sets. The method employed in \cite{DeBeule2005} relies on the characterisation
of ovoids in $\h(3, 4)$, whereas our proof observes a parity argument within $\h(5, q^2)$. Hence our
proof opens the way for further investigation into the existence of ovoids of $\h(5,q^2)$ for $q>2$.

\section{Preliminary background}

Throughout, we will use the symbol $q$ for a prime power $q:=p^h$, $p$ prime and $h \ge 1$, and we
will denote the the finite field of order $q$ as $\gf(q)$. The vector space of dimension $d$ over
$\gf(q)$ will be written as $V(d,q)$, and $\pg(n,q)$ will denote the projective space with
underlying vector space $V(n+1,q)$. Let $f$ be a (reflexive) sesquilinear or quadratic form on
$V(n+1,q)$. The elements of the finite classical polar space $\p$ associated with $f$ are the
totally singular or totally isotropic subspaces of $\pg(n,q)$ with relation to $f$, according to
whether $f$ is a quadratic or sesquilinear form. The Witt index of the form $f$ determines the the
dimension of the subspaces of maximal dimension contained in $\p$; the {\em rank} $\p$ equals the
Witt index of its form, and the (projective) dimension of generators will be one less than the Witt
index. Hence, a finite classical polar space of rank $r$ embedded in $\pg(n,q)$ has an underlying
form of Witt index $r$, and contains points, lines, \ldots, $(r-1)$-dimensional subspaces. The
elements of maximal dimension are called its \emph{generators}.

We will use projective notation for finite polar spaces so that they differ from the standard
notation for their collineation groups. For example, we will use the notation $\w(d-1,q)$ to denote
the symplectic polar space coming from the vector space $V(d,q)$ equipped with a non-degenerate
alternating form. Here is a summary of the notation we will use for finite polar spaces, together
with their ovoid numbers (which we define below).

\begin{table}[H]
\caption{Notation for the finite classical polar spaces, together with their ovoid numbers.}\label{table:overview}
\begin{tabular}{lcccc}
\toprule
Polar Space&Notation&Collineation Group&Ovoid Number& Type $e$\\
\midrule
Symplectic&$\w(d-1,q)$, $d$ even&$\mathsf{P\Gamma Sp}(d,q)$&$q^{d/2}+1$ & $1$\\
Hermitian&$\h(d-1,q^2)$, $d$ odd&$\mathsf{P\Gamma U}(d,q)$&$q^{d}+1$ & $3/2$\\
Hermitian&$\h(d-1,q^2)$, $d$ even&$\mathsf{P\Gamma U}(d,q)$&$q^{d-1}+1$ & $1/2$\\
Orthogonal, elliptic&$\mathsf{Q}^-(d-1,q)$, $d$ even&$\mathsf{P\Gamma O}^-(d,q)$&$q^{d/2}+1$ & $2$\\
Orthogonal, parabolic&$\mathsf{Q}(d-1,q)$, $d$ odd&$\mathsf{P\Gamma O}(d,q)$&$q^{(d-1)/2}+1$ & $1$\\
Orthogonal, hyperbolic&$\mathsf{Q}^+(d-1,q)$, $d$ even&$\mathsf{P \Gamma O}^+(d,q)$&$q^{d/2-1}+1$ & $0$\\
\bottomrule
\end{tabular}
\end{table}

Let $\p$ be a finite polar space defined by a sesquilinear or quadratic form $f$, and let $X$ be a
point of of the ambient projective space. Then $X^\perp$ is the set of projective points whose
coordinates are orthogonal to $X$ with respect to the form $f$. Note that when $f$ is a quadratic
form, it determines a (possibly degenerate when $q$ is even), symplectic form\footnote{When $f$ is a
quadratic form, $g(v,w) := f(v+w)-f(v)-f(w)$ is an alternating form.} $g$, and two projective points
$X$ and $Y$ are orthogonal with relation to $f$ if, by definition, they are orthogonal with relation
to $g$. The set of points $X^\perp$ is a hyperplane, and when $X$ is a point of $\p$, the hyperplane
$X^\perp$ is the {\em tangent hyperplane} at $X$ to $\p$. For any set $A$ of points, $A^\perp :=
\cap_{X \in A} X^\perp$. The following result is fundamental in the theory of finite classical polar
spaces.

\begin{lemma}
Let $\p_r$ be a finite polar space of rank $r$, $r \geq 2$, and let $X$ be a point of $\p_r$. Then
the set of points $X^\perp \cap \p_r$ is a cone with vertex $X$ and base a polar space $\p_{r-1}$ of
rank $r-1$, of the same type as $\p_r$.
\end{lemma}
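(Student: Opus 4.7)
The plan is to first show that $X^\perp \cap \p_r$ really is a cone with apex $X$, and then to identify the base by passing to the quotient space $X^\perp / X$.

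For the cone property, I would take any point $Y \in (X^\perp \cap \p_r) \setminus \{X\}$ and argue that the projective line $XY$ lies entirely inside $\p_r$. Since both $X$ and $Y$ are in $\p_r$ they are singular/isotropic with respect to $f$, and by hypothesis $Y \in X^\perp$, so every vector on the $2$-dimensional subspace $\langle X, Y\rangle$ is singular/isotropic (this uses bilinearity in the Hermitian/symplectic case, and in the quadratic case the identity $f(\lambda x + \mu y) = \lambda^2 f(x) + \lambda\mu g(x,y) + \mu^2 f(y)$ combined with $X \in X^\perp$ via the associated bilinear form $g$). Hence $XY \subseteq \p_r$, so $X^\perp \cap \p_r$ is a union of lines through $X$, i.e.\ a cone with vertex $X$.

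Next I would quotient out by $X$. The hyperplane $X^\perp$ has projective dimension $n-1$, and $X$ lies in the radical of the restriction of the bilinear form to $X^\perp$, so $f$ (or its associated bilinear form) induces a well defined sesquilinear/quadratic form $\bar f$ on the $(n-1)$-dimensional quotient $X^\perp / X$. The image of the cone is exactly the set of singular/isotropic points of $\bar f$, giving a polar space $\p_{r-1}$ sitting in $\pg(n-2,q)$ as the base. It remains to check that $\bar f$ is non-degenerate of the same ambient type: any vector $v \in X^\perp$ whose image is in the radical of $\bar f$ is orthogonal to all of $X^\perp$, and the standard $2$-dimensional hyperbolic extension argument (pair $v$ with any point outside $X^\perp$) forces $v \in \langle X \rangle$; preservation of type then follows because the discriminant of the quadratic form, and the determinant/Hermitian character, are invariant under this hyperbolic splitting.

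Finally I would verify that the Witt index of $\bar f$ is $r-1$. Any maximal singular subspace $\Sigma$ of $\p_r$ through $X$ has projective dimension $r-1$, lies in $X^\perp$, and projects to an $(r-2)$-dimensional singular subspace of $\p_{r-1}$; conversely any singular subspace of $\p_{r-1}$ lifts through $X$ to a singular subspace of $\p_r$ of one higher dimension. Hence the rank is exactly $r-1$.

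The main obstacle I expect is the case $p=2$ with $f$ quadratic: there the associated bilinear form $g$ is alternating and may have a non-trivial radical, so one has to be careful that the quadratic form really descends to $X^\perp / X$ (not merely the bilinear form) and that the resulting polar space is of the same orthogonal type ($\mathsf{Q}^+$, $\mathsf{Q}$, or $\mathsf{Q}^-$). This is handled by choosing a hyperbolic complement $\langle X, X'\rangle$ inside the ambient space and observing that $\p_r$ splits orthogonally as this hyperbolic line together with a polar space of the same type and rank $r-1$, which is precisely the projection of $X^\perp \cap \p_r$.
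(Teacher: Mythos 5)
Your argument is correct, but note that the paper does not actually prove this lemma: it is stated as a standard, fundamental fact, with only a footnote recording the algebraic content (the induced form $f'(X+u,X+v):=f(u,v)$ on the quotient $X^\perp/X$) and a pointer to Hirschfeld and Thas. What you have written is precisely the standard argument the authors take for granted: the cone property via sesquilinearity or the polarization identity, non-degeneracy of the induced form on $X^\perp/X$, the Witt index count by projecting generators through $X$, and the orthogonal splitting $V=\langle X,X'\rangle\perp W$ to see that the base has the same type. The only step where your first formulation is slightly too quick is the non-degeneracy claim for parabolic quadrics in even characteristic: there the polar form $g$ has a one-dimensional radical (the nucleus $N$), so $v$ orthogonal to all of $X^\perp$ only forces $v\in\langle X,N\rangle$ rather than $v\in\langle X\rangle$; since $N$ is non-singular for the quadratic form this does no harm, and the hyperbolic-splitting argument you give in your final paragraph is exactly the clean way to dispose of this case, so the proof as a whole stands.
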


As a consequence, the elements of a finite polar space $\p_r$ incident with a point $X \in \p_r$
induce a polar space of rank $r-1$ of the same type, which is called the {\em quotient space}, and
which is sometimes denoted as $X^\perp /X$. Equivalently, projecting the elements from $X$ onto any
hyperplane $\pi$ not on $X$, will yield a finite polar space isomorphic with $\p_{r-1}$ embedded in
the subspace $\pi \cap X^\perp$, \cite[p. 3]{Hirschfeld1991}\footnote{If $(V,f)$ is a formed space,
and $X$ is a totally isotropic subspace of $V$, then we can equip the quotient vector space
$X^\perp/X$ with the form $f'$ defined by $f'(X+u,X+v):=f(u,v)$, which is the algebraic counterpart
of the geometric statement.}.

\begin{definition}
Suppose that $\p$ is a finite polar space. An ovoid is a set $\cO$ of points of $\p$ such that every
generator of $\p$ meets $\cO$ in exactly one point.
\end{definition}

The non-existence of ovoids in higher rank is implied by the non-existence of ovoids in low rank.
Projection from a point $X$ not in an ovoid is well-known to produce an ovoid of the quotient polar
space (see \cite[\S2]{Kantor1982}), which we reproduce below.

\begin{lemma}\label{lemma:slicing}
Let $\mathcal{O}$ be an ovoid of a finite polar space $\p_r$ of rank $r \ge 3$ and embedded in
$\pg(d,q)$, and let $X$ be a point of $\p$ not in $\mathcal{O}$. Then $\cO$ induces an ovoid $\cO_X$
in a polar space embedded in $\pg(d-2,q)$ of the same type but of rank $r-1$.
\end{lemma}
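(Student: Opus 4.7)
The plan is to explicitly realise the quotient polar space using the cone lemma stated just above, project the ovoid into it, and check that the projected set meets each generator of the quotient in exactly one point.

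First I would pick a hyperplane $\pi$ of $\pg(d,q)$ not incident with $X$. By the preceding lemma, $X^\perp\cap\p_r$ is a cone with vertex $X$ whose base, obtained by intersecting with $\pi$, is a finite classical polar space $\p_{r-1}$ of the same type as $\p_r$ embedded in $\pi\cap X^\perp\cong\pg(d-2,q)$. Projection from $X$ onto $\pi$ sends each point $P\in(X^\perp\cap\p_r)\setminus\{X\}$ to a unique point $\bar P\in\p_{r-1}$, and, crucially, the generators of $\p_r$ through $X$ are exactly the subspaces of the form $\langle X,g'\rangle$ where $g'$ ranges over the generators of $\p_{r-1}$. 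Since $X\notin\cO$, every point of $\cO$ lying in $X^\perp$ is distinct from $X$, so I can define
\[
\cO_X:=\{\,\bar P\mid P\in\cO\cap X^\perp\,\}\subseteq\p_{r-1}.
\]

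Next I would verify the ovoid property of $\cO_X$. For any generator $g'$ of $\p_{r-1}$, the subspace $g:=\langle X,g'\rangle$ is a generator of $\p_r$ through $X$, hence meets $\cO$ in a unique point $P$; because $X\notin\cO$ we have $P\neq X$, and since $g\subseteq X^\perp$ we have $P\in\cO\cap X^\perp$. The projection $\bar P$ lies on $g'$ (as it is the intersection of the line $XP\subseteq g$ with $\pi$), so $|\cO_X\cap g'|\ge 1$. Conversely, suppose $\bar P_1,\bar P_2\in\cO_X\cap g'$ came from distinct ovoid points $P_1,P_2\in\cO\cap X^\perp$. Then each line $XP_i$ is contained in $\langle X,g'\rangle=g$, so $P_1,P_2\in g\cap\cO$; by the ovoid condition this forces $P_1=P_2$ and hence $\bar P_1=\bar P_2$. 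Thus $|\cO_X\cap g'|=1$ for every generator $g'$ of $\p_{r-1}$, which is exactly the ovoid condition.

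The only subtlety I anticipate is the distinctness argument in the previous paragraph: one needs that different ovoid points in $X^\perp$ project to different points of $\p_{r-1}$, equivalently that a polar-space line of $\p_r$ through $X$ meets $\cO$ in at most one point besides $X$. This follows immediately from the fact that any line of $\p_r$ is contained in some generator, together with the defining property of the ovoid, but it is the one place where the rank-$\ge 3$ (or really rank-$\ge 2$) assumption and the ovoid axiom are both used simultaneously, and it is what guarantees that the map $P\mapsto\bar P$ is a bijection from $\cO\cap X^\perp$ onto $\cO_X$.
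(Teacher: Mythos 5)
Your proof is correct and follows essentially the same approach as the paper: projecting $\cO$ from $X$ onto the quotient polar space $X^\perp/X$ and using the bijection between generators of $\p_{r-1}$ and generators of $\p_r$ through $X$. The paper's own proof is only a two-line sketch of exactly this argument, so your version simply supplies the existence/uniqueness details (and the injectivity remark) that the paper leaves implicit.
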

\begin{proof}
The quotient space $X^\perp/X$ is a polar space $\p_{r-1}$ of rank $r-1$. Each generator of $\p_{r}$
meets $\cO$ in exactly one point, so after projection from $X$, the induced generators of $\p_{r-1}$
meet the projection of $\cO$ in exactly one point. So $\cO$ induces an ovoid $\cO_X$ of $\p_{r-1}$.
\end{proof}

More information on the existence of ovoids of all finite polar spaces (and references for these
examples) can be found in \cite{DBKMnova}.

\section{Intriguing sets of strongly regular graphs}

This section repeats the theory of intriguing sets of finite polar spaces as considered by
\cite{Bamberg2012, Bamberg2007, Eisfeld1998a} in the more general context of strongly regular
graphs. All the results in this section are due to Delsarte \cite{Delsarte1973, Delsarte1977}. We
include them to make the paper more self-contained.

Let $\Gamma = (X, \sim)$ be a graph, where $X$ is a set of vertices, and $\sim$ is a symmetric
relation with $\sim ~\subseteq X \times X$. We say that two vertices are \emph{adjacent} if they are
in relation $\sim$. We say that $\Gamma$ is strongly regular with parameters $(n, k, \lambda, \mu)$
if each of the following holds:
\begin{compactenum}[(i)]
  \item The number of vertices is $n$.
  \item Each vertex is adjacent with $k$ vertices.
  \item Each pair of adjacent vertices is commonly adjacent to $\lambda$ vertices.
  \item Each pair of non-adjacent vertices is commonly adjacent to $\mu$ vertices.
\end{compactenum}
The \emph{adjacency matrix} $A$ of $\Gamma$ is matrix over $\setC$ indexed by the vertices $X$ with
$(A)_{xy} = 1$ if the vertex $x$ and the vertex $y$ are adjacent and $(A)_{xy} = 0$ if the vertex
$x$ and the vertex $y$ are non-adjacent. If $0 < k < n-1$, then the adjacency matrix of $A$ has
exactly three eigenvalues $k$, $\posev$, and $\negev$ \cite[Theorem 1.3.1 (i)]{Brouwer1989}. The
eigenvalue $k$ has multiplicity $1$, and the \emph{all-ones vector} $j$ is one of its eigenvectors.
The other eigenvalues satisfy
\begin{align}
  &\posev = \frac{\lambda - \mu + \sqrt{(\lambda - \mu)^2 + 4(k-\mu)}}{2},\label{eq:identitied_eplus} \\
  &\negev = \frac{\lambda - \mu - \sqrt{(\lambda - \mu)^2 + 4(k-\mu)}}{2}.\label{eq:identitied_eminus}
\end{align}
Let $V_+$ and $V_-$ be the eigenspaces corresponding to the eigenvalues $\posev$ and $\negev$,
respectively. Since $A$ is a symmetric matrix over $\setC$, the eigenspaces of $A$ yield an
orthogonal direct sum decomposition of $\setC^n$:
\[
\setC^n = \langle j \rangle \perp V_+ \perp V_-.
\]

The parameters of a strongly regular graph are constrained by a wealth of well-known equations. For
our purposes, we need the following three equations \cite[Theorem 1.3.1]{Brouwer1989}:
\begin{align}\label{eq:identitied_srg_parameters}
  n\mu &= (k-\posev)(k-\negev), \quad \negev\posev = \mu - k,  \quad k(k-\lambda -1) = (n-k-1)\mu.
\end{align}

Following the theme of \cite{Bamberg2012}, we will introduce the notion of a \emph{weighted
intriguing set} of a strongly regular graph.

\begin{definition}
Let $\chi \in \setC^n$ and $\epsilon \in \{ -, + \}$. We say that $\chi$ is a \emph{weighted
intriguing set} if $\chi \in \langle j \rangle \perp V^\epsilon$. If $\chi$ is a $0$-$1$-vector,
then we say that $\chi$ is an \emph{intriguing set}. We call a weighted intriguing set $\chi$ in
$\langle j \rangle \perp V_-$ a \emph{weighted ovoid}. We call a weighted intriguing set $\chi$ in
$\langle j \rangle \perp V_+$ a \emph{weighted tight set}. We say that a collection of intriguing
sets have the same \emph{type} if they are either all weighted ovoids, or if they are all weighted
tight sets.
\end{definition}

In the language of Delsarte, an intriguing set is a \emph{design} for the association scheme arising
from the strongly regular graph. The following result is due to Delsarte \cite{Delsarte1973,
Delsarte1977}.

\begin{lemma}\label{lemma:equivalence_intriguingset_definition}
  Let $\Gamma = (X, \sim)$ be a strongly regular graph with parameters $(n, k, \lambda, \mu)$, $0 <
  k < n-1$ and adjacency matrix $A$. Let $\posev$ and $\negev$ be the eigenvalues of $A$ different
  to $k$. Let $\chi \in \setC^n$. Then
  \begin{align*}
    (j \chi^\top)^2 k + \negev (n \chi \chi^\top - (j \chi^\top)^2) \le n \chi A \chi^\top \le (j \chi^\top)^2 k + \posev (n \chi \chi^\top - (j \chi^\top)^2).
  \end{align*}
  Equality holds on the left hand side if and only if $\chi$ is a weighted ovoid. Equality holds on
  the right hand side if and only if $\chi$ is a weighted tight set. In particular, if $\chi$ is a
  $0$-$1$-vector, then
  \begin{align*}
    (j \chi^\top)^2 k + \negev j \chi^\top (n- j \chi^\top) \le n \chi A \chi^\top \le (j \chi^\top)^2 k + \posev j \chi^\top (n- j \chi^\top).
  \end{align*}
\end{lemma}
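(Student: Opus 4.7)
The plan is to decompose $\chi$ along the orthogonal direct sum $\setC^n = \langle j \rangle \perp V_+ \perp V_-$ recalled in the text, and then to express each of the three bilinear forms appearing in the statement in terms of this decomposition. Writing $\chi = \alpha j + \chi_+ + \chi_-$ with $\chi_\pm \in V_\pm$, I would first read off $j\chi^\top = \alpha n$ (since $V_+$ and $V_-$ are orthogonal to $j$) and
\[
\chi\chi^\top = \alpha^2 n + \|\chi_+\|^2 + \|\chi_-\|^2.
\]
Using $Aj = kj^\top$, $A\chi_+^\top = \posev \chi_+^\top$ and $A\chi_-^\top = \negev \chi_-^\top$, I would then obtain
\[
\chi A \chi^\top = \alpha^2 k n + \posev\|\chi_+\|^2 + \negev\|\chi_-\|^2.
\]

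The heart of the proof is a short bookkeeping calculation. Substituting the three identities above, I would compute
\[
\bigl((j\chi^\top)^2 k + \posev(n\chi\chi^\top - (j\chi^\top)^2)\bigr) - n\chi A\chi^\top = n(\posev-\negev)\|\chi_-\|^2,
\]
and symmetrically
\[
n\chi A\chi^\top - \bigl((j\chi^\top)^2 k + \negev(n\chi\chi^\top - (j\chi^\top)^2)\bigr) = n(\posev-\negev)\|\chi_+\|^2.
\]
Since $\posev > \negev$ (as $\posev$ and $\negev$ are the two distinct roots of the quadratic determined by \eqref{eq:identitied_eplus}--\eqref{eq:identitied_eminus} with $\posev$ the larger), both right-hand sides are non-negative, which yields the two inequalities. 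The equality cases are immediate: the upper bound is attained precisely when $\chi_- = 0$, i.e.\ when $\chi \in \langle j \rangle \perp V_+$ (a weighted tight set), and the lower bound is attained precisely when $\chi_+ = 0$, i.e.\ when $\chi \in \langle j \rangle \perp V_-$ (a weighted ovoid).

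For the final assertion, I would note that when $\chi$ is a $0$-$1$-vector the entries of $\chi$ are idempotent, so $\chi\chi^\top = j\chi^\top$. Substituting this into $n\chi\chi^\top - (j\chi^\top)^2$ gives $j\chi^\top(n - j\chi^\top)$, which is the specialised form in the statement. There is no real obstacle here: the only thing to watch is to keep straight which eigenspace corresponds to which sign, and to invoke $\posev > \negev$ to get the direction of both inequalities right in a single stroke.
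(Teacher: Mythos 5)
Your proof is correct, and it is the standard spectral argument underlying Delsarte's result: decompose $\chi$ along $\langle j\rangle \perp V_+ \perp V_-$, evaluate the three quadratic forms on the pieces, and observe that the two differences are $n(\posev-\negev)\|\chi_-\|^2$ and $n(\posev-\negev)\|\chi_+\|^2$ with $\posev>\negev$. The paper itself gives no proof of this lemma --- it is stated with a citation to Delsarte --- so there is nothing to compare against; your argument is exactly the one the authors are implicitly relying on. The only point to watch is that the statement allows $\chi\in\setC^n$ while your $\|\chi_\pm\|^2$ and the nonnegativity conclusion tacitly assume $\chi$ is real (for genuinely complex $\chi$ one has $\chi\chi^\top\neq\|\chi\|^2$ and the inequalities need not even be statements about real numbers); since $A$ is real symmetric the eigenspaces may be taken real, and all applications in the paper are to real (indeed $0$-$1$) vectors, so this is a defect of the statement rather than of your proof.
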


If the vector $\chi$ is the characteristic vector of a set $Y$,\footnote{This means $\chi_x = 1$ if
$x \in Y$, and $\chi_x = 0$ if $x \in X \setminus Y$.} then $\chi A \chi^\top/j\chi^\top = \chi A
\chi^\top/|Y|$ is the average number of adjacent vertices in $Y$. A \emph{coclique} of a graph is a
set of pairwise non-adjacent matrices (i.e., $\chi A \chi^\top=0$), while a \emph{clique} is a set
of pairwise adjacent vertices (i.e., $\chi A \chi^\top= x(x-1)$). In the collinearity graph of a
finite polar space, cocliques are traditionally called \emph{ovoids} if they satisfy the first
inequality of Lemma \ref{lemma:equivalence_intriguingset_definition}, and cliques are traditionally
called \emph{tight sets} if they satisfy the second inequality of Lemma
\ref{lemma:equivalence_intriguingset_definition}. We will adopt the same terminology for strongly
regular graphs. Lemma \ref{lemma:equivalence_intriguingset_definition} makes it clear that the
characteristic vector $\chi$ of an ovoid is a weighted ovoid with $j\chi^\top =
\frac{n\negev}{\negev-k}$, and that the characteristic vector of a tight set is a weighted tight
set\footnote{This equality is not obvious, since Lemma
\ref{lemma:equivalence_intriguingset_definition} only shows $j\chi^\top =
\frac{n(\posev+1)}{n-k+\posev}$. One can use the identities (\ref{eq:identitied_srg_parameters}) to
prove the claim.} with $j\chi^\top = 1 - \frac{k}{\negev}$. Consistent with this definition, we call
a weighted ovoid $\chi$ a \emph{weighted $m$-ovoid} if $j\chi^\top = m \frac{n\negev}{\negev-k}$,
and we call a weighed tight set $\psi$ a \emph{weighted $i$-tight set} if $j\psi^\top = i(1 -
\frac{k}{\negev})$. If a weighted $m$-ovoid $\chi$ is a $0$-$1$-vector, then we say that $\chi$ is
an $m$-ovoid. If a weighted $i$-tight set $\psi$ is a $0$-$1$-vector, then we say that $\psi$ is an
$i$-tight set. We identify $0$-$1$-vectors with the corresponding sets of vertices. Then this is
consistent with the usual definitions of $m$-ovoids and $i$-tight sets which can be found
\cite{Bamberg2007}.

The following result is crucial for the investigation of intriguing sets.

\begin{lemma}\label{lemma:intersection_tight_set_ovoid}
 Let $\chi$ be a weighted $m$-ovoid. Let $\psi$ be a weighted $i$-tight set. Then $\chi\psi^\top =
 mi$.
\end{lemma}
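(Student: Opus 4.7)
The plan is to exploit the orthogonal direct sum decomposition $\setC^n = \langle j \rangle \perp V_+ \perp V_-$. Since $\chi$ is a weighted $m$-ovoid, it lies in $\langle j\rangle \perp V_-$, so I would write $\chi = \alpha j + v_-$ with $v_- \in V_-$. Likewise, $\psi = \beta j + v_+$ with $v_+ \in V_+$. Expanding the inner product $\chi \psi^\top$ produces four terms, and three of them vanish: $j v_+^\top = 0$ and $v_- j^\top = 0$ because $V_\pm \perp \langle j \rangle$, and $v_- v_+^\top = 0$ because $V_- \perp V_+$. This leaves $\chi \psi^\top = \alpha \beta \, (j j^\top) = \alpha \beta n$.

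Next I would identify the coefficients $\alpha$ and $\beta$ by taking inner products with $j$. From $j \chi^\top = \alpha n$ together with the defining normalisation $j \chi^\top = m \cdot \frac{n \negev}{\negev - k}$, we get $\alpha = m \cdot \frac{\negev}{\negev - k}$. Similarly, from $j\psi^\top = \beta n$ and $j \psi^\top = i\left(1 - \frac{k}{\negev}\right) = i \cdot \frac{\negev - k}{\negev}$, we get $\beta = \frac{i(\negev - k)}{n \negev}$.

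Substituting back gives
\[
\chi \psi^\top = \alpha \beta n = m \cdot \frac{\negev}{\negev - k} \cdot \frac{i(\negev - k)}{n \negev} \cdot n = mi,
\]
which is the desired identity.

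The proof is essentially a one-line orthogonality argument dressed up with the right normalisations, so I do not foresee any genuine obstacle. The only point to be careful about is confirming that the normalisations of $j\chi^\top$ and $j\psi^\top$ given earlier are correctly recorded (in particular the subtlety flagged in the footnote after Lemma~\ref{lemma:equivalence_intriguingset_definition} about rewriting $\frac{n(\posev+1)}{n-k+\posev}$ as $1 - k/\negev$ via the identities \eqref{eq:identitied_srg_parameters}); once those are in hand, the cancellation of $\negev$ and $\negev - k$ is immediate.
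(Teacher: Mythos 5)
Your proposal is correct and follows essentially the same route as the paper's proof: both decompose $\chi$ and $\psi$ into their $\langle j\rangle$-components plus vectors in $V_-$ and $V_+$ respectively, kill the cross terms by orthogonality of the eigenspace decomposition, and read off the coefficients $m\frac{\negev}{\negev-k}$ and $\frac{i(\negev-k)}{n\negev}$ from the defining normalisations so that the product collapses to $mi$. No issues.
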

 \begin{proof}
   By definition, there exist $\chi_- \in V_-$ and $\psi_+ \in V_+$ such that $\chi = m
   \frac{\negev}{\negev-k} j + \chi_-$ and $\psi = i(1 - \frac{k}{\negev}) j/n + \psi_+$. The
   vectors $\chi$ and $\psi$ are orthogonal, hence $ \chi\psi^\top = mi \frac{\negev}{\negev-k} (1 -
   \frac{k}{\negev}) j j^\top = mi$.
 \end{proof}

Let $G$ be a group of automorphisms of the graph $\Gamma$. We say that $G$ {\em acts generously
transitive} on $\Gamma$, if for all vertices $x,y \in \Gamma$, there exists an automorphism $g \in
G$ such that $x^g = y$ and $y^g = x$. We say that a weighted intriguing set is \emph{non-trivial} if
it is not in the span of $j$. The following characterisation of intriguing sets, which follows
immediately from the definitions, is very helpful.

\begin{proposition}\label{propn:characterisation_by_intersection}
  Let $G$ be a group which acts generously transitively on $\Gamma$. Let $\chi, \psi \in \setC^n$
  with $\chi, \psi \notin \langle j \rangle$. Then the following statements are equivalent.
  \begin{compactenum}[(a)]
   \item One of the vectors $\chi$ and $\psi$ is a non-trivial weighted $m$-ovoid, and one of the
     vectors is a non-trivial weighted $i$-tight set.
   \item For all $g \in G$, we have $\psi^g \chi^\top = mi$.
  \end{compactenum}
\end{proposition}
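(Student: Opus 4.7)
The plan is to prove the two directions separately, leveraging Lemma~\ref{lemma:intersection_tight_set_ovoid} for the forward direction and Schur orthogonality of matrix coefficients for the converse.

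For (a) $\Rightarrow$ (b), suppose without loss of generality that $\chi$ is the weighted $m$-ovoid and $\psi$ the weighted $i$-tight set. Since $G$ acts by graph automorphisms, each permutation matrix $P_g$ commutes with $A$, and so the three eigenspaces $\langle j\rangle,V_+,V_-$ are $G$-invariant. In particular $\psi^g=\beta j+\psi_+^g$ with $\psi_+^g\in V_+$ and $j\psi^{g,\top}=j\psi^{\top}=i(1-k/\negev)$, so $\psi^g$ is again a weighted $i$-tight set. Applying Lemma~\ref{lemma:intersection_tight_set_ovoid} to the pair $(\chi,\psi^g)$ gives $\chi\psi^{g,\top}=mi$.

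For (b) $\Rightarrow$ (a), decompose $\chi=\alpha j+\chi_++\chi_-$ and $\psi=\beta j+\psi_++\psi_-$ with $\chi_\pm,\psi_\pm\in V_\pm$. Cross-eigenspace products vanish, so
\[
\psi^g\chi^{\top}=\alpha\beta n+\psi_+^g\chi_+^{\top}+\psi_-^g\chi_-^{\top}.
\]
Averaging over $G$ and using that the only $G$-invariant direction is $\langle j\rangle$ (hence $\tfrac{1}{|G|}\sum_g\psi_\pm^g=0$), one obtains $\alpha\beta n=mi$, after which the hypothesis reduces to $\psi_+^g\chi_+^{\top}+\psi_-^g\chi_-^{\top}=0$ for every $g\in G$.

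The crucial step is to conclude from this last identity that one of the pairs $(\chi_+,\psi_-)$, $(\chi_-,\psi_+)$ vanishes. Generous transitivity is equivalent to every orbit of $G$ on $X\times X$ being self-paired, which makes the centralizer algebra of the permutation representation symmetric and commutative; equivalently, $\setC^n$ is multiplicity-free as a $G$-module, and $V_+,V_-$ split as orthogonal sums of distinct irreducibles. For each irreducible summand $W$, the map $g\mapsto\psi_W^g\chi_W^{\top}$ is a matrix coefficient of $W$, and by Schur orthogonality distinct irreducibles contribute linearly independent coefficients. Hence each summand must vanish identically; and since $W$ is irreducible, if $\psi_W\neq 0$ then $\{\psi_W^g:g\in G\}$ spans $W$, forcing $\chi_W=0$. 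Together with $\chi,\psi\notin\langle j\rangle$, this yields the dichotomy $\chi\in\langle j\rangle\perp V_-,\;\psi\in\langle j\rangle\perp V_+$ (or vice versa); the relation $\alpha\beta n=mi$ and the normalisations recorded just before the proposition then give the correct weight parameters.

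The main obstacle is the final identification step: in full generality $V_+$ and $V_-$ may decompose further as $G$-modules, so ``disjoint support at the level of irreducibles'' need not translate directly into ``one vector has no $V_+$ part, the other has no $V_-$ part''. In the rank-$3$ settings that this paper applies the proposition to, $V_+$ and $V_-$ are themselves $G$-irreducible and the argument closes immediately; in the more general multiplicity-free setting one uses that $V_+$ and $V_-$ are $A$-eigenspaces to keep track of which side of the decomposition each irreducible $W$ belongs to.
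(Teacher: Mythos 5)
The paper offers no proof of this proposition at all --- it is asserted to ``follow immediately from the definitions'' --- so there is nothing to compare line by line; I can only assess your argument on its merits. Your forward direction is exactly the intended (and genuinely immediate) argument: the permutation matrices commute with $A$, so $\langle j\rangle$, $V_+$ and $V_-$ are $G$-invariant, $\psi^g$ is again a weighted $i$-tight set, and Lemma~\ref{lemma:intersection_tight_set_ovoid} gives $\psi^g\chi^\top=mi$. This half is correct and complete.

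For the converse, your reduction to $\alpha\beta n=mi$ and $\psi_+^g\chi_+^\top+\psi_-^g\chi_-^\top=0$ is correct, and the obstacle you flag at the end is genuine rather than a technicality: generous transitivity only forces $\setC^n$ to be multiplicity-free, and if, say, $V_+=W_1\perp W_2$ and $V_-=W_3\perp W_4$ as $G$-modules, then $\chi=\alpha j+\chi_1+\chi_3$ and $\psi=\beta j+\psi_2+\psi_4$ satisfy (b) (all cross terms between inequivalent irreducibles vanish) while neither vector is a weighted ovoid or a weighted tight set. So the equivalence as literally stated fails without an extra hypothesis. What rescues it in this paper is that the proposition is only ever invoked for the full collineation group of a finite classical polar space acting on points, which has permutation rank $3$; then $V_+$ and $V_-$ are irreducible and inequivalent, your Schur-orthogonality step splits the sum into its two summands, each of which must vanish, and the spanning argument closes the dichotomy exactly as you describe. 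Two small points to tighten: the implication ``$\{\psi_W^g:g\in G\}$ spans $W$ forces $\chi_W=0$'' uses that the bilinear form $(u,v)\mapsto uv^\top$ is nondegenerate on $W$, which holds because all eigenspaces of the real symmetric matrix $A$ are defined over the reals; and you should record explicitly that $\alpha\beta n=mi$ together with $j\chi^\top=n\alpha$ and $j\psi^\top=n\beta$ recovers the normalisations of $m$ and $i$ stated before the proposition. With the rank-$3$ hypothesis made explicit, your proof is correct and is considerably more informative than the paper's bare assertion.
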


\begin{corollary}\label{corollary:span_tight_sets}
  Let $G$ be a group which acts generously transitively on $\Gamma$.
  Let $\chi \in \setC^n$.
  \begin{compactenum}[(a)]
    \item If $\chi$ is a non-trivial weighted $m$-ovoid with $m \neq 0$, then $\langle j \rangle \perp V_- = \langle \chi^g: g \in G \rangle$.
    \item If $\chi$ is a non-trivial weighted $i$-tight set with $i \neq 0$, then $\langle j \rangle \perp V_+ = \langle \chi^g: g \in G \rangle$.
    \item If $\chi$ is a non-trivial weighted $0$-ovoid, then $V_- = \langle \chi^g: g \in G \rangle$.
    \item If $\chi$ is a non-trivial weighted $0$-tight set, then $V_+ = \langle \chi^g: g \in G \rangle$.
  \end{compactenum}
\end{corollary}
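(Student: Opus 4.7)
Let $W:=\langle \chi^g:g\in G\rangle$. Because the $G$-action commutes with the adjacency matrix $A$, each eigenspace of $A$ is $G$-invariant, and hence so is $\langle j\rangle\perp V_-$. This immediately gives the inclusion $W\subseteq \langle j\rangle\perp V_-$ in (a), with analogous containments in (b)--(d); the actual work is the reverse inclusions.

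For (a), I would show that $W^\perp\cap(\langle j\rangle\perp V_-)=\{0\}$. Fix $\psi$ in this intersection. Since the $G$-action on $\setC^n$ is by permutation matrices, it preserves the standard inner product, and the condition $\psi\cdot\chi^g=0$ for all $g\in G$ is equivalent to $\psi^g\chi^\top=0$ for all $g\in G$. This is exactly condition (b) of Proposition \ref{propn:characterisation_by_intersection}, with constant value $mi=0$.

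If $\psi\in\langle j\rangle$, write $\psi=\beta j$; then $0=\psi\chi^\top=\beta\cdot mn\negev/(\negev-k)$, and since $m\neq 0$ while $\negev\notin\{0,k\}$ (the latter because $\negev$ is the smaller eigenvalue of a non-trivial strongly regular graph), this forces $\beta=0$. Otherwise $\psi\notin\langle j\rangle$, so Proposition \ref{propn:characterisation_by_intersection} applies and asserts that one of $\chi,\psi$ is a non-trivial weighted ovoid and the other a non-trivial weighted tight set. But non-trivial weighted tight sets lie in $\langle j\rangle\perp V_+$, while $\chi$ and $\psi$ both lie in $\langle j\rangle\perp V_-$, and the intersection of these two spaces is $\langle j\rangle$, which non-trivial vectors avoid by definition. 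Neither $\chi$ nor $\psi$ can therefore play the tight-set role, and this contradiction forces $\psi=0$.

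Parts (b), (c), (d) follow by the same template with $V_+\leftrightarrow V_-$ and the ovoid/tight-set terminology interchanged where needed; in (c) and (d) the hypothesis $m=0$ or $i=0$ places $\chi$ entirely in $V_-$ or $V_+$, and the case $\psi\in\langle j\rangle$ collapses immediately from $\langle j\rangle\cap V_\pm=\{0\}$. The main subtle point, and really the only step that demands a moment's thought, is rewriting $\psi\perp\chi^g$ as $\psi^g\chi^\top=0$ via $G$-invariance of the inner product so that Proposition \ref{propn:characterisation_by_intersection} becomes applicable; the remainder is bookkeeping of which subspace each vector inhabits.
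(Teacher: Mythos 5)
Your proof is correct and follows essentially the same route as the paper: take a vector in the orthogonal complement of $\langle \chi^g : g \in G\rangle$ inside the target eigenspace and use Proposition~\ref{propn:characterisation_by_intersection} to force it to be an intriguing set of the opposite type, hence zero. The only difference is that you spell out the $\langle j\rangle$-component and the case distinction explicitly for part (a), whereas the paper writes out part (c) and dismisses the rest with ``the other cases follow similarly.''
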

\begin{proof}
  Let $\chi$ be a weighted $0$-ovoid.
  Obviously, $\langle \chi^g: g \in G \rangle \subseteq V_-$. 
  Suppose contrary to our claim that $\langle \chi^g: g \in G \rangle \neq V_-$.
  Then there exists a weighted $0$-ovoid $\chi' \in V_- \setminus \langle \chi^g: g \in G \rangle$ orthogonal to $\langle \chi^g: g \in G \rangle$.
  Hence by Proposition \ref{propn:characterisation_by_intersection}, $\chi'$ is a weighted tight set; a contradiction.  
  The other cases follow similarly.
\end{proof}

Proposition \ref{propn:characterisation_by_intersection} is an excellent tool for showing that a set
is intriguing: If one knows a weighted ovoid $\chi$, then one can see if a vector $\psi$ is a
weighted tight set just by considering $\chi^g \psi^\top$. If one knows a weighted tight set $\psi$,
then one can see if a vector $\chi$ is a weighted ovoid set just by considering $\chi (\psi^g)^\top$
(for each $g\in G$). One needs some general examples for non-trivial intriguing sets to do so, and
we shall construct some simple examples in the following. For a point $x \in X$ we write $x^\sim$
for the set of all vertices adjacent with $x$. We write $\chi_M$ for the characteristic vector of a
set of vertices $M \subseteq X$.

\begin{example}\label{example:std_ovoid}
  Let $\Gamma = (X, \sim)$ be a strongly regular graph with parameters $(n, k, \lambda, \mu)$. Let
  $\alpha = \frac{k(n-k+\negev)}{n\negev + k - \negev}$. Let $x$ be a vertex of $\Gamma$. Then $
  \chi = \alpha \chi_{\{x\}} + \chi_{x^\sim}$ is a non-trivial weighted $\frac{(\alpha +
  k)(1-\frac{k}{\negev})}{n}$-ovoid.
\end{example}

\begin{proof}
  We have to show that $\chi$ satisfies the lower bound on $\chi A \chi^\top$ from Lemma
  \ref{lemma:equivalence_intriguingset_definition} with equality. By the definition of $\chi$, we
  have
  \begin{align*}
    &j\chi^\top = \alpha + k, &&\chi \chi^\top = \alpha^2 + k, &&\chi A \chi^\top = k(2\alpha + \lambda).
  \end{align*}
  Together with the equations \eqref{eq:identitied_eminus} and \eqref{eq:identitied_srg_parameters}
  one can easily check that $\chi$ reaches the lower bound in Lemma
  \ref{lemma:equivalence_intriguingset_definition}. Hence, $\chi$ is a weighted ovoid. The weight of
  the weighted ovoid follows from the definition.
\end{proof}

Similarly, we get the following analogous result for weighted tight sets.

\begin{example}\label{example:std_tightset}
  Let $\Gamma = (X, \sim)$ be a strongly regular graph with parameters $(n, k, \lambda, \mu)$. Let
  $\alpha = \frac{k(n-k+\posev)}{n\posev + k - \posev}$. Let $x$ be a vertex of $\Gamma$. Then $
  \chi = \alpha \chi_{\{x\}} + \chi_{x^\sim}$ is a non-trivial weighted $\frac{\alpha +
  k}{1-\frac{k}{\negev}}$-tight set.
\end{example}

By definition, linear combinations of weighted intriguing sets of the same type are again weighted
intriguing sets. This motivates the following lemma, whose proof is straightforward and we leave it
(as a simple exercise) for the reader.

\begin{lemma}\label{lemma:intriguingsets_from_subgroups}
  Let $G$ be a group which acts generously transitively on $\Gamma$. Let $U$ be a subgroup of $G$.
  Let $\chi$ be a non-trivial weighted intriguing set of $\Gamma$. Define $\cI$ as the set
  \begin{align*}
    \left\{ \sum_{u \in U} (\chi^g)^u: g \in G \right\}.
  \end{align*}
  Then the following holds.
  \begin{compactenum}[(a)]
    \item The elements of $\cI$ are intriguing sets of the same type as $\chi$.
    \item Let $\psi$ be a weighted intriguing set of the same type as $\chi$, fixed by $U$ (i.e.,
      $\psi^u = \psi$ for all $u \in U$). Then $\psi \in \langle \cI \rangle$.
  \end{compactenum}
\end{lemma}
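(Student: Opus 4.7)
The plan is to combine the fact that $G$ preserves the eigenspaces of $A$ with the spanning statement of Corollary~\ref{corollary:span_tight_sets}, and then to handle part~(b) by an averaging argument over $U$.

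For~(a): because every $g\in G$ is a graph automorphism, the permutation action of $G$ on $\setC^n$ commutes with $A$ and fixes the all-ones vector $j$. Hence $G$ preserves each summand of the orthogonal decomposition $\setC^n = \langle j\rangle \perp V_+ \perp V_-$, and in particular the spaces $\langle j\rangle \perp V_+$ of weighted tight sets and $\langle j\rangle \perp V_-$ of weighted ovoids. Therefore $\chi^g$ lies in the same such summand as $\chi$ for every $g\in G$, and since these summands are closed under complex linear combinations, $\sum_{u\in U}(\chi^g)^u$ is again a weighted intriguing set of the same type as $\chi$.

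For~(b): by Corollary~\ref{corollary:span_tight_sets}, the $G$-orbit of the non-trivial weighted intriguing set $\chi$ spans the full space of weighted intriguing sets of its type (either $\langle j\rangle \perp V_\epsilon$ or $V_\epsilon$, depending on whether or not the weight of $\chi$ vanishes). Writing $\psi = \sum_{g\in G} c_g\,\chi^g$ for suitable scalars $c_g$ and using $\psi^u = \psi$ for every $u\in U$, the averaging identity
\[
\psi \;=\; \frac{1}{|U|}\sum_{u\in U}\psi^u \;=\; \sum_{g\in G} \frac{c_g}{|U|}\sum_{u\in U}(\chi^g)^u
\]
exhibits $\psi$ as a $\setC$-linear combination of elements of $\cI$, so $\psi\in\langle\cI\rangle$.

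The content is essentially formal; the only obstacle is bookkeeping. One has to keep straight how $G$ acts on $\setC^n$ (via the permutation representation induced by its action on vertices) as opposed to how it acts on the graph itself, and verify that this representation leaves the three-way eigenspace decomposition invariant. Once that is in place, both parts are one-line consequences of Corollary~\ref{corollary:span_tight_sets} and the averaging trick.
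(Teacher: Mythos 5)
The paper gives no proof of this lemma (it is explicitly left ``as a simple exercise''), and your argument is precisely the intended one: for (a), the permutation representation of $G$ commutes with $A$ and fixes $j$, so it preserves $\langle j\rangle\perp V_+$ and $\langle j\rangle\perp V_-$, and these subspaces are closed under the summation over $U$; for (b), Corollary~\ref{corollary:span_tight_sets} plus averaging over $U$ does the job. One caveat, which you half-notice in your parenthetical case split but do not follow through on, and which is really a defect of the statement rather than of your proof: if $\chi$ is a weighted $0$-ovoid or $0$-tight set, Corollary~\ref{corollary:span_tight_sets} only yields $\langle \chi^g : g\in G\rangle = V_\epsilon$, so a $\psi$ of the same type but with nonzero weight has a $j$-component that no element of $\langle\cI\rangle\subseteq V_\epsilon$ can supply, and the step ``write $\psi=\sum_g c_g\chi^g$'' fails (as does the lemma itself in that degenerate case); since every $\chi$ to which the paper applies this lemma has nonzero weight, this does not affect anything downstream, but it deserves a sentence.
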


Recall from Table~\ref{table:overview} that all finite polar spaces have a {\em type}, and the type
$e$ can be easily discerned from the \emph{rank $1$} example, having $q^e+1$ singular points, where
$q$ is the order of the defining field. So for example, the hyperbolic quadrics have $e=0$ since
$\q^+(1,q)$ consists of two singular points (on the projective line), yet the Hermitian spaces of
even dimension have type $e=3/2$ as $\h(2,q^2)$ has $q^3+1$ singular points. In general, $e \in \{
0, 1/2, 1, 3/2, 2\}$. This number will be useful for summarising the essential combinatorial
information of all finite polar spaces in this section. The collinearity graph of the point set of a
finite polar space of rank $d$ and type $e \in \{ 0, 1/2, 1, 3/2, 2\}$ over $\gf(q)$ is a strongly
regular graph with the following parameters \cite[Section 4]{Bamberg2007}:
\begin{center}
\begin{tabular}{l|l}
\toprule
$n = \frac{(q^{d-1+e} + 1)(q^d-1)}{q-1}$ &   $k = q \frac{(q^{d-2+e} + 1)(q^{d-1}-1)}{q-1}$\\
$\lambda = q^{d-1} - 1 + q \frac{(q^{d-2+e} + 1)(q^{d-2}-1)}{q-1}$ & $\mu = \frac{(q^{d-2+e} + 1)(q^{d-1}-1)}{q-1}$\\   
$\posev = q^{d-1}-1$& $\negev = -1 - q^{d-2+e}$ \\
 $m_r = q^e \frac{(q^d-1)(q^{d-2+e}+1)}{(q-1)(q^{e-1}+1)}$ &$m_s =q \frac{(q^{d-1}-1)(q^{d-1+e}+1)}{(q-1)(q^{e-1}+1)}$\\
\bottomrule
\end{tabular}
\end{center}

Let $P$ be a point of the finite polar space. We write $P^\sim$ for all points collinear, but not
equal to $P$. Then the weighted intriguing set in Example \ref{example:std_ovoid} equals
\begin{align*}
  -(q^{d-1} - 1) \chi_P + \chi_{P^\sim}
\end{align*}
and is a non-trivial weighted $\frac{q^{d-1}-1}{q-1}$-ovoid. The map in Example \ref{example:std_tightset} equals
\begin{align*}
  (q^{d-2+e}+1) \chi_P + \chi_{P^\sim}
\end{align*}
and is a non-trivial weighted $(q^{d-2+e}+1)$-tight set. Let $T$ be the set of points of a generator
of the considered polar space. Then $\chi_T$ is a well-known $1$-tight set \cite[\S3]{Bamberg2007}.
We will only use these three intriguing sets to show that other vectors are intriguing sets. The
collineation group of a finite polar space acts generously transitively on the point set of the
polar space, so we can use Proposition \ref{propn:characterisation_by_intersection} and Lemma
\ref{lemma:intriguingsets_from_subgroups}.

Here are some more useful weighted intriguing sets which exist in all finite polar spaces.

\begin{lemma}\label{lemma:singular_line_tightset}
  Let $0 \le s < d - 1$. Let $S$ be a totally isotropic $s$-dimensional subspace of a finite polar
  space of rank $d$ and type $e$ over a finite field with $q$ elements. Then
  \begin{align*}
    (q^{d+e-2-s}+1) \chi_S + \chi_{S^\sim}
  \end{align*}
  is a weighted $(q^{d+e-2-s}+1)$-tight set.
\end{lemma}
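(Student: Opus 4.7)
The plan is to derive the lemma from the single-point case (Example~\ref{example:std_tightset}) by summing the resulting weighted tight sets over the points of $S$ and then making a single linear correction by the all-ones vector. Write $\theta_i := (q^{i+1}-1)/(q-1)$ for the number of projective points of an $i$-dimensional subspace over $\gf(q)$, and, for each point $P$ of the polar space, let $\psi_P := (q^{d+e-2}+1)\chi_P + \chi_{P^\sim}$, which by Example~\ref{example:std_tightset} is a weighted tight set. Since the weighted tight sets are precisely the vectors in $\langle j \rangle \perp V_+$ and therefore form a linear subspace of $\setC^n$, both $\Psi := \sum_{P \in S} \psi_P$ and $\Psi - \theta_{s-1}\, j$ are again weighted tight sets ($j$ itself, lying in $\langle j \rangle$, being a trivial weighted tight set).

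The next step is to evaluate $\Psi$ pointwise, using that any two distinct points of the totally isotropic subspace $S$ are collinear. For a point $Q$ of the polar space, three cases arise: if $Q \in S$, only the summand $P = Q$ contributes through $\chi_P$ while each of the other $\theta_s - 1$ points of $S$ contributes through $\chi_{P^\sim}$, for a total of $q^{d+e-2} + \theta_s$; if $Q \in S^\perp \setminus S$, every $P \in S$ contributes via $\chi_{P^\sim}$, giving $\theta_s$; and if $Q \notin S^\perp$, then $Q^\perp$ is a hyperplane of the ambient projective space not containing $S$, so the set of $P \in S$ collinear with $Q$ is the hyperplane $S \cap Q^\perp$ of $S$, contributing $\theta_{s-1}$.

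Subtracting $\theta_{s-1}\, j$ now kills the contribution at all points outside $S^\perp$, and the telescoping identity $\theta_s - \theta_{s-1} = q^s$ yields
\[
\Psi - \theta_{s-1}\, j \;=\; (q^{d+e-2} + q^s)\chi_S + q^s\, \chi_{S^\sim} \;=\; q^s\bigl[(q^{d+e-2-s}+1)\chi_S + \chi_{S^\sim}\bigr].
\]
Dividing by $q^s$ exhibits $(q^{d+e-2-s}+1)\chi_S + \chi_{S^\sim}$ as a weighted tight set. To pin down the weight as $q^{d+e-2-s}+1$, I would compute $j\chi^\top$ using the fact that $S^\perp/S$ is a polar space of rank $d-s-1$ and type $e$, which gives $|S^\sim| = q^{s+1}(q^{d-s-2+e}+1)(q^{d-s-1}-1)/(q-1) = (q^{d+e-2-s}+1)(\theta_{d-1}-\theta_s)$; combined with the $\chi_S$ contribution this produces $j\chi^\top = (q^{d+e-2-s}+1)\theta_{d-1}$, and dividing by $1 - k/\negev = \theta_{d-1}$ (a short computation from the parameters listed before the lemma) gives exactly the claimed weight.

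The main obstacle is getting the three-case evaluation of $\Psi$ correct, in particular the case $Q \notin S^\perp$ where one must recognise that collinearity of $P \in S$ with $Q$ is governed by the hyperplane $S \cap Q^\perp$ of $S$. Once those three values are in hand, the passage to the desired vector is purely mechanical.
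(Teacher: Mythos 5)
Your proof is correct, but it takes a genuinely different route from the paper's. The paper writes the target vector as a sum of characteristic vectors of generators (each a $1$-tight set): summing over all generators through $S$ puts weight $\prod_{i=0}^{d-2-s}(q^{i+e}+1)$ on the points of $S$ and $\prod_{i=0}^{d-3-s}(q^{i+e}+1)$ on the points of $S^\sim$, and a single division finishes the argument, with the parameter of the tight set read off for free from the number of generators used. You instead sum the point-perp tight sets $\psi_P$ of Example \ref{example:std_tightset} over $P\in S$ and correct by $\theta_{s-1}\,j$; this needs the three-case pointwise evaluation (which you carry out correctly -- the key case $Q\notin S^\perp$ works because $Q^\perp$ is a hyperplane not containing $S$, so exactly $\theta_{s-1}$ points of $S$ are collinear with $Q$, and any two perpendicular singular points of the polar space are indeed collinear), together with the fact that $j$ is a trivial weighted tight set, plus a separate computation of $|S^\sim|$ and of $1-k/\negev=\theta_{d-1}$ to identify the parameter as $q^{d+e-2-s}+1$; all of these check out. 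Both arguments rest on the same principle, namely that the weighted tight sets form the linear subspace $\langle j\rangle\perp V_+$, and both are complete; the paper's is shorter because the generator count localises all the combinatorics in one product formula, while yours trades that for an elementary collinearity analysis relative to $S$ and an explicit weight calculation, and has the mild advantage of not requiring the formula for the number of generators through a totally isotropic subspace.
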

\begin{proof}
  Recall that a generator is a $1$-tight set and that a sum of tight sets is again a tight set.
  There are $    \prod_{i=0}^{d-2-s} (q^{i+e}+1)$
  generators through $S$, and there are $    \prod_{i=0}^{d-3-s} (q^{i+e}+1)$
  generators through $\langle S, P \rangle$ when $P \in S^\sim$.
  Hence,
  \begin{align*}
    \left(\prod_{i=0}^{d-2-s} (q^{i+e}+1) \right) \chi_S + \left(\prod_{i=0}^{d-3-s} (q^{i+e}+1) \right) \chi_{S^\sim}
  \end{align*}
  is a weighted $\left(\prod_{i=0}^{d-2-s} (q^{i+e}+1)\right)$-tight set.
  Dividing through by $\prod_{i=0}^{d-3-s} (q^{i+e}+1)$ yields our claim.
\end{proof}

\section{The non-existence proof of ovoids of $\q^+(9,q)$, $q$ even}\label{sec:q+9}

We begin with a simple property about the parabolic quadric $\q(6,q)$ that will be useful in the
proof of Theorem \ref{thm:noovoidsQ+}.

\begin{lemma}\label{le:threelines}
Let $\pi_1, \pi_2, \pi_3$ be disjoint planes of $\q(6, q)$. Then there exist exactly $q+1$ lines
meeting $\pi_1, \pi_2, \pi_3$. These lines span a subspace of dimension $3$ meeting $\q(6,q)$ in a
hyperbolic quadric $\q^+(3,q)$.
\end{lemma}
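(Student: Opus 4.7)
The plan is to push the question inside $\Sigma_{12}:=\langle\pi_1,\pi_2\rangle$, the $\pg(5,q)$ spanned by two disjoint generators, which meets $\q(6,q)$ in the hyperbolic quadric $\q^+(5,q)$ with $\pi_1,\pi_2$ as maximal subspaces of opposite type.

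First I would observe that any line transversal to $\pi_1,\pi_2,\pi_3$ has three singular points, hence lies entirely on $\q(6,q)$, and as two of those points already lie in $\Sigma_{12}$, the whole transversal lies in $\Sigma_{12}$. Next I would rule out $\pi_3\subset\Sigma_{12}$: otherwise $\pi_3$ would be a generator of $\q^+(5,q)$ disjoint from both $\pi_1$ and $\pi_2$, but disjointness of generators of $\q^+(5,q)$ forces opposite systems, and no plane can lie in the system opposite to both of $\pi_1$ and $\pi_2$ at once (two generators in the same system of $\q^+(5,q)$ meet in a line). The Grassmann formula then gives that $\ell_0:=\pi_3\cap\Sigma_{12}$ is a projective line, automatically disjoint from $\pi_1$ and $\pi_2$, and every transversal meets $\pi_3$ at a point of $\ell_0$.

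For each $R\in\ell_0$ a second Grassmann count yields
\[
\dim\bigl(\langle R,\pi_1\rangle\cap\langle R,\pi_2\rangle\bigr)=3+3-5=1,
\]
so there is a unique line $\ell_R$ through $R$ meeting both $\pi_1$ and $\pi_2$. These $q+1$ lines $\ell_R$ are pairwise distinct, because $\ell_R\cap\ell_0=\{R\}$ (a second common point would force $\ell_R=\ell_0$, while $\ell_0$ avoids $\pi_1$), so there are exactly $q+1$ transversals.

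For the last assertion, I would identify the map $R\mapsto\ell_R\cap\pi_1$ with the restriction to $\ell_0$ of the projective isomorphism $\Sigma_{12}/\pi_2\cong\pi_1$; its image is a line $m_1\subset\pi_1$, and similarly $m_2\subset\pi_2$. Every $\ell_R$ then lies in the $3$-space $\Sigma:=\langle m_1,m_2\rangle$, which has projective dimension $3$ since $m_1$ and $m_2$ are skew. Finally, $\Sigma\cap\q(6,q)$ is a quadric of $\pg(3,q)$ containing two skew lines, and the only quadric in $\pg(3,q)$ with that property is the hyperbolic $\q^+(3,q)$. The main obstacle is the step ruling out a third pairwise disjoint generator inside $\q^+(5,q)$; after that, everything reduces to two applications of the Grassmann formula and the classification of quadrics in $\pg(3,q)$.
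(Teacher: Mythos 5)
Your argument is essentially the paper's: both proofs work inside the $5$-space $\langle\pi_1,\pi_2\rangle$, which meets $\q(6,q)$ in a $\q^+(5,q)$, show that $\pi_3$ meets this $5$-space in a line, and parametrise the transversals by the points of that line. If anything, your version is more complete: the count $\dim\bigl(\langle R,\pi_1\rangle\cap\langle R,\pi_2\rangle\bigr)=1$ supplies the \emph{existence} of a transversal through each point of $\ell_0$, which the paper's ``consequently, the number of such lines is exactly $q+1$'' leaves implicit. Two small repairs are needed. First, two generators of $\q^+(5,q)$ in the same system meet in a \emph{point}, not a line; this is harmless, since all your argument uses is that same-system generators are never disjoint. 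Second, the final classification step is stated too loosely: a pair of distinct planes is also a quadric of $\pg(3,q)$ containing two skew lines, so two skew lines alone do not force $\Sigma\cap\q(6,q)$ to be hyperbolic. You already have the means to exclude this case: $\Sigma$ is not totally singular (generators of $\q(6,q)$ are planes), and $\ell_0$ lies in $\Sigma$ (each $R$ lies on $\ell_R$) and is skew to both $m_1$ and $m_2$; a plane pair cannot carry three pairwise skew lines, since a line with $q+1\ge 3$ points inside the union of two planes must lie in one of them, and two of the three lines would then lie in the same plane and meet. With those two touches the proof is complete and matches the paper's route.
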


\begin{proof}
The planes $\pi_1$ and $\pi_2$ span a $5$-dimensional subspace $\alpha$ meeting $\q(6,q)$ in a
hyperbolic quadric $\q^+(5,q)$. Clearly $\alpha \cap \pi_3$ is a line $l$. A line meeting $\pi_1$,
$\pi_2$, and $\pi_3$ in a point, meets $l$ in exactly one point. Hence all these lines are contained
in the $4$-dimensional space $\langle l,\pi_2\rangle$, which meets $\pi_1$ in a line $m$. It is not
possible that two such lines $g, h$ meet $l$ in the same point $p$, since then the plane $\langle g,
h \rangle$ meets both $\pi_2$ and $\pi_3$ in a line, a contradiction since $\pi_1$ and $\pi_2$ are
disjoint. Consequently, the number of such lines is exactly $q+1$, and they span a $3$-dimensional
subspace, necessarily meeting $\q(6,q)$ in a hyperbolic quadric $\q^+(3,q)$.
\end{proof}

Let $\cO$ be an ovoid of $\q^+(9, q)$ where $q$ is even. Let $P_1, P_2 \in \cO$. Denote the section
$P_1^\bot \cap P_2^\bot \cap \q^+(9,q)$ by $\q^+_7$; whose intersection produces a quadric
isomorphic to the hyperbolic quadric $\q^+(7,q)$. Let $\sigma_1$ be a generator of $\q^+_7$ (hence
$\sigma_1$ is a solid). Consider an elliptic quadric $\q^-(3,q)$ embedded in $\sigma_1$, denote the
point set of this quadric as $\q^-_3$. Let $\Pi$ be the set of $q^2+1$ tangent hyperplanes at the
points of $\q^-_3$ in $\sigma_1$. So $\Pi$ is a set of planes. Let $\sigma_2 \subseteq \q^+_7$ be a
generator of $\q^+_7$ disjoint from $\sigma_1$. Note that for any plane $\pi \in \Pi$, $\pi^\perp$
is a $6$-dimensional space, meeting $\q^+_7$ in a cone with vertex $\pi$ and base a hyperbolic line
isomorphic to $\q^+(1,q)$. Furthermore, $\pi^\perp$ meets $\sigma_2$ in a point. Define
\begin{align*}
  \cL = \{ \langle \pi^\bot \cap \sigma_2, \pi \cap \q^-_3 \rangle: \pi \in \Pi \}.
\end{align*}
So $\cL$ is a set of $q^2+1$ lines.

\begin{lemma}\label{le:lineperps}
Let $l_1, l_2 \in \cL$ with $l_1 \neq l_2$. Let $P \in l_1$ and $Q \in l_2$ be collinear points with
$P, Q \notin \sigma_1$. Then $P \notin l_2^\perp$.
\end{lemma}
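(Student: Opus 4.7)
The plan is to suppose $P \in l_2^\perp$ and derive a contradiction by pushing this condition through the duality that the polar form induces between the disjoint generators $\sigma_1$ and $\sigma_2$. The main obstacle is recognising and stating cleanly the polarity identity $A_1^\perp \cap \sigma_1 = \pi_1$; everything else is a short linear-algebra computation.

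First, I would parameterise $l_i = \langle A_i, B_i \rangle$, where $A_i := \pi_i^\perp \cap \sigma_2$ and $B_i := \pi_i \cap \q^-_3$. Since $l_i \cap \sigma_1 = \{B_i\}$ and $B_i$ uniquely determines the tangent plane $\pi_i$, the map $\pi_i \mapsto l_i$ is injective, so $l_1 \neq l_2$ forces $\pi_1 \neq \pi_2$ and in particular $B_1 \neq B_2$. Because $P \in l_1 \setminus \sigma_1$ we have $P \neq B_1$, and may write $P = \lambda A_1 + \mu B_1$ with $\lambda \neq 0$. Letting $b$ denote the polar bilinear form, suppose for contradiction that $P \in l_2^\perp$; then $b(P, B_2) = 0$. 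Since $B_1, B_2 \in \sigma_1$ and $\sigma_1$ is totally singular, $b(B_1, B_2) = 0$, so the expansion collapses to $\lambda\, b(A_1, B_2) = 0$; as $\lambda \neq 0$, this forces $A_1 \perp B_2$.

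Now I invoke the duality. Because $\sigma_1$ and $\sigma_2$ are disjoint generators that together span $P_1^\perp \cap P_2^\perp$, on which the polar form restricts non-degenerately, the induced pairing $\sigma_1 \times \sigma_2 \to \gf(q)$ is non-degenerate. Consequently the map $A \mapsto A^\perp \cap \sigma_1$ from points of $\sigma_2$ to planes of $\sigma_1$ is a bijection, inverse to the map $\pi \mapsto \pi^\perp \cap \sigma_2$ used to define each $A_i$. In particular $A_1^\perp \cap \sigma_1 = \pi_1$, whence $B_2 \in A_1^\perp \cap \sigma_1 = \pi_1$. But $\pi_1 \cap \q^-_3 = \{B_1\}$ because $\pi_1$ is the tangent plane at $B_1$ to the elliptic quadric $\q^-_3$, forcing $B_2 = B_1$ and contradicting $B_1 \neq B_2$.

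A final remark: the hypotheses $Q \in l_2$, $Q \notin \sigma_1$, and $P \perp Q$ never enter the argument, since $P \in l_2^\perp$ already implies $P \perp Q$ for every $Q \in l_2$; they merely describe the geometric configuration in which the lemma will subsequently be invoked.
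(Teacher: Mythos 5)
Your proof is correct and takes essentially the same route as the paper's: both deduce from $P\in l_2^\perp$ that the point $l_2\cap\sigma_1$ is orthogonal to $\langle l_1\cap\sigma_1,P\rangle=l_1$, hence lies in $l_1^\perp\cap\sigma_1=\pi_1$, contradicting the tangency of $\pi_1$ to $\q^-_3$; your version merely makes the final step explicit in coordinates. Your closing remark that the hypotheses on $Q$ are never used also matches the paper, where $Q$ enters the proof only through what appears to be a typo for $P$.
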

\begin{proof}
Suppose, for a proof by contradiction, that $P \in l_2^\perp$, or equivalently, $l_2 \subseteq
P^\perp$. Then $l_2 \cap \sigma_1 \subseteq \langle l_1 \cap \sigma_1, Q \rangle^\perp$. Since $P
\notin \sigma_1$, we have $\langle l_1 \cap \sigma_1, P \rangle = l_1$. Hence, $l_2 \cap \sigma_1
\subseteq l_1^\perp$. This contradicts the definition of $\cL$.
\end{proof}

\begin{lemma}\label{le:thirdgenerator1}
Let $l_1, l_2, l_3 \in \cL$ pairwise different. Let $P_1 \in l_1 \setminus (\sigma_1 \cup
\sigma_2)$. Define $P_2 = P_1^\perp \cap l_2$ and $P_3 = P_1^\perp \cap l_3$. Then $P_2$ and $P_3$
are collinear.
\end{lemma}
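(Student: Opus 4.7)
The plan is to exploit the natural identification of $\sigma_2$ with the dual of $\sigma_1$ induced by the polarity of $\q^+_7$. Since $\sigma_1$ and $\sigma_2$ are disjoint generators, the bilinear form $B$ associated with $\q^+(9,q)$ restricts to a perfect pairing between them, so each point $S \in \sigma_2$ corresponds to the plane $S^\perp \cap \sigma_1$. By its very definition, $S_R := \pi_R^\perp \cap \sigma_2$ corresponds under this identification to the tangent plane $\pi_R$ of $\q^-_3$ at $R$. Writing $B^-$ for the symmetric bilinear form on $\sigma_1$ associated with the quadratic form defining $\q^-_3$, and normalising the chosen vector representative of $S_R$ appropriately, one obtains the identity
\[
  B(R', S_R) = B^-(R, R'), \qquad R \in \q^-_3,\ R' \in \sigma_1.
\]

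Set $R_i := l_i \cap \sigma_1$ and $S_i := l_i \cap \sigma_2$, so that every point of $l_i$ has the form $R_i + \mu S_i$, and $P_1 = R_1 + \lambda S_1$ for some $\lambda \neq 0$ since $P_1 \notin \sigma_1 \cup \sigma_2$. For a general point $R_2 + \mu S_2$ of $l_2$, the total isotropy of $\sigma_1$ and $\sigma_2$ kills the diagonal terms and the identity above gives
\[
  B(P_1, R_2 + \mu S_2) = \mu B(R_1, S_2) + \lambda B(S_1, R_2) = (\lambda + \mu)\, B^-(R_1, R_2).
\]
The factor $B^-(R_1, R_2)$ is nonzero: otherwise $R_2$ would lie in the tangent plane $\pi_{R_1}$, forcing $R_2 = R_1$ since $\pi_{R_1} \cap \q^-_3 = \{R_1\}$. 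Because $q$ is even, $\lambda + \mu = 0$ yields $\mu = \lambda$, so $P_2 = R_2 + \lambda S_2$; the same argument gives $P_3 = R_3 + \lambda S_3$.

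It then remains to compute
\[
  B(P_2, P_3) = \lambda B(R_2, S_3) + \lambda B(S_2, R_3) = \lambda B^-(R_3, R_2) + \lambda B^-(R_2, R_3) = 2\lambda\, B^-(R_2, R_3) = 0,
\]
where the final step uses the symmetry of $B^-$ together with $q$ being even. Thus $P_3 \in P_2^\perp$, and $R_2 \neq R_3$ forces $P_2 \neq P_3$ (they have different $\sigma_1$-components), so $P_2$ and $P_3$ are distinct singular points of $\q^+(9,q)$ spanning a totally singular line, i.e.\ they are collinear in the polar space. The only genuine obstacle is recognising the identification that makes $S_R$ the polarity image of $R$ under $\q^-_3$; once that is in place, the miracle --- that both $P_2$ and $P_3$ inherit exactly the same parameter $\lambda$ as $P_1$, and that $B(P_2, P_3)$ collapses through the symmetry of $B^-$ --- depends essentially on $q$ being even.
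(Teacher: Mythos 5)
Your proof is correct, and it takes a genuinely different route from the paper. The paper argues synthetically by contradiction: assuming $P_2$ and $P_3$ non-collinear, it iterates the maps $X \mapsto X^\perp \cap l_k$ around the three lines (using Lemma \ref{le:lineperps} to keep each step well-defined) and produces an infinite sequence of pairwise distinct points on a finite line. Your argument instead coordinatises via the perfect pairing that $B$ induces between the disjoint generators $\sigma_1$ and $\sigma_2$; the key normalisation $B(R',S_R)=B^-(R,R')$ is legitimate because both functionals on $\sigma_1$ are nonzero with kernel exactly the tangent plane $\pi_R$ (the first because the pairing is perfect and $S_R=\pi_R^\perp\cap\sigma_2$, the second because $\pi_R$ is the $B^-$-polar of $R$), so they are proportional and one may rescale the representative of $S_R$. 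What your approach buys is considerable: it pins down the points explicitly as $P_i=R_i+\lambda S_i$ with a common parameter $\lambda$, which not only proves the lemma but immediately exhibits the transversal generators $\{R+\lambda S_R: R\in\q^-_3\}$ that the paper must construct separately afterwards, and it makes the role of even characteristic completely transparent --- indeed your computation shows $B(P_2,P_3)=-2\lambda B^-(R_2,R_3)\neq 0$ for odd $q$, so the lemma genuinely fails there, whereas the dependence on $q$ even is not visible on the surface of the paper's iteration argument. The only points worth a half-sentence more care are the justification of the normalisation just described and the observation that $S_2\notin P_1^\perp$ (so that $P_1^\perp\cap l_2$ really is the single affine point your formula produces), both of which follow at once from your setup.
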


\begin{proof}
Let $P_1 \in l_1 \setminus (\sigma_1 \cup \sigma_2)$. Let $P_2 = P_1^\perp \cap l_2$ and $P_3 =
P_1^\perp \cap l_3$. Suppose that $P_2$ and $P_3$ are non-collinear. Set $P_1^0 = P_1$, $P_1^i =
(P_3^{i-1})^\perp \cap l_1$ for $i > 0$, $P_2^i = (P_1^i)^\perp \cap l_2$ for $i \ge 0$, and $P_3^i
= (P_2^i)^\perp \cap l_3$ for $i \ge 0$. By Lemma \ref{le:lineperps}, all $P_k^i$ are points. By
$P_2$ and $P_3$ non-collinear, $P_3 \neq P_3^0$. Let $i$ be the smallest number with $P_k^i = P_k^j$
for some $k$ and $j < i$. Then we have ${P^i_1}^\perp \supseteq \langle P^{i-1}_3, P^{j-1}_3 \rangle
= l_3$ if $k=1$ (and similar for $k=2,3$) which contradicts Lemma \ref{le:lineperps}. Hence, $P_k^i
\neq P_k^j$ for all $i \neq j$. This is a contradiction as the number of points on $l_k$ is finite.
\end{proof}

\begin{theorem}\label{thm:noovoidsQ+}
Let $q$ be even and $n\ge 4$. Then $\q^+(2n+1,q)$ has no ovoids.
\end{theorem}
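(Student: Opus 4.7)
The plan is to reduce to the base case $n=4$ and derive a contradiction there. By iterated application of Lemma \ref{lemma:slicing}, any ovoid of $\q^+(2n+1,q)$ with $n\ge 5$ yields an ovoid of $\q^+(9,q)$ after $n-4$ projections from non-ovoid points, since each slicing decreases the rank by one and preserves the type. So it suffices to rule out an ovoid $\cO$ of $\q^+(9,q)$ for $q$ even; I would fix $P_1, P_2 \in \cO$ and build $\q^+_7, \sigma_1, \sigma_2, \q^-_3, \Pi, \cL$ as in the preamble.

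Several structural facts come essentially for free. Since two ovoid points cannot be collinear (else they would share a generator), $P_i^\perp \cap \cO = \{P_i\}$ and hence $\q^+_7 \cap \cO = \emptyset$. In particular $\sigma_1 \cup \sigma_2$ contains no ovoid point, and for every $l \in \cL$ the set $l \cap \cO$ is contained in the middle segment $l \setminus (\sigma_1 \cup \sigma_2)$ of $q-1$ points and has size at most one. Next, using Lemma \ref{le:thirdgenerator1}, for each middle point $P \in l_1$ the set $T(P) = \{P\} \cup \{P^\perp \cap l_j : l_j \in \cL \setminus \{l_1\}\}$ consists of $q^2+1$ pairwise collinear points, hence is contained in a totally singular subspace and thus in some generator of $\q^+(9,q)$; so $|T(P) \cap \cO| \le 1$. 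A short check via Lemma \ref{le:lineperps} shows that the $q-1$ transversals $T(P)$ partition the middle points of $\cL$, so the number $|M|$ of middle ovoid points is at most $q-1$.

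The decisive step is a parity count that exploits $q$ even and brings Lemma \ref{le:threelines} into play. I would take a non-tangent hyperplane section $H$ of the ambient of $\q^+_7$, so that $H \cap \q^+_7 \cong \q(6,q)$, chosen to contain three pairwise disjoint planes naturally associated to the configuration (for instance, the meets of $H$ with $\sigma_1, \sigma_2$, and a third sub-generator of $\q^+_7$ arising from lines of $\cL$) as generators of this $\q(6,q)$. Lemma \ref{le:threelines} then produces $q+1$ lines of $\q(6,q)$ meeting all three planes and spanning a $\q^+(3,q)$. The plan is to count ovoid-point incidences along these $q+1$ lines in two ways---once via the transversal/line grid in $\cL$ and once via the $1$-tight-set condition on the generators through them---and reconcile the two counts modulo~$2$; since $q$ is even, $q-1$, $q+1$, and $q^2+1$ are all odd, while various combinatorial multiplicities coming from generator counts are even, which I expect to produce an impossible parity. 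The main obstacle is precisely this last step: isolating a count whose mod-$2$ value is forced to be $0$ under one interpretation and $1$ under the other.
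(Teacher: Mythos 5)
Your reduction to $n=4$ and the setup of $\q^+_7,\sigma_1,\sigma_2,\q^-_3,\Pi,\cL$ match the paper, but the core of your argument is aimed at the wrong set of points, and the decisive count is missing. You analyse ovoid points lying \emph{on} the lines of $\cL$ (your ``middle points'') and on the transversals $T(P)$; however, every line of $\cL$ and every transversal is contained in $\q^+_7\subseteq P_1^\perp\cap P_2^\perp$, and since the ovoid points $P_1,P_2$ are non-collinear with every other point of $\cO$, the section $\q^+_7$ contains no ovoid point at all. So your quantity $|M|$ is $0$ and the bound $|M|\le q-1$ carries no information; this branch of the argument is vacuous. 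The paper instead works with the set $\p$ of points $X$ of the quadric \emph{outside} $P_1^\perp\cup P_2^\perp$ such that $\langle X,l\rangle$ is totally singular for some $l\in\cL$ --- that is, points whose perp contains a line of $\cL$ --- which is where the ovoid actually meets the configuration.

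The step you explicitly leave open (``isolating a count whose mod-$2$ value is forced'') is precisely the content of the paper's proof, and it does not come from the direction you sketch. The paper proves, by a case analysis on $|X^\perp\cap\q^-_3|\in\{1,q+1\}$ and an application of Lemma \ref{le:threelines} to the three planes $X^\perp\cap\sigma_1$, $X^\perp\cap\sigma_2$, $X^\perp\cap\sigma_3$ (where $\sigma_3$ is a third generator meeting all lines of $\cL$, obtained from Lemma \ref{le:thirdgenerator1}), that every $X\in\p$ has \emph{exactly two} lines of $\cL$ contained in $X^\perp$. Double counting the incident pairs $(l,X)$ with $l\in\cL$, $X\in\p\cap\cO$, $l\subseteq X^\perp$ then gives $2\,|\p\cap\cO|=(q^2+1)(q^2-1)=q^4-1$, because each quotient $l^\perp/l\cong\q^+(5,q)$ carries an induced ovoid of $q^2+1$ points, exactly two of which are $P_1$ and $P_2$. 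For $q$ even the right-hand side is odd, which is the contradiction. Without identifying $\p$ and establishing the ``exactly two'' property, your proposal does not close.
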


\begin{proof}
By Lemma \ref{lemma:slicing}, it suffices to consider the case $n=4$. Let $\cO$ be an ovoid of
$\q^+(9,q)$. Consider two points $P_1,P_2 \in \cO$, and consider the set $\cL$. Define $\p$ as the
set of points $X \in \q^+(9, q) \setminus (P_1^\bot \cup P_2^\bot)$ such that $\langle X,l\rangle
\subseteq \q^+(9,q)$ for at least one line $l \in \cL$. We show first that for each point $X \in
\p$, there are exactly two lines $l_i \in \cL$ such that $l_i \in X^\perp$. Let $P \in \p$. Then
$P^\perp \cap P_1^\perp \cap P_2^\perp \cap \q^+(9,q)$ is a parabolic quadric section $\q_6$, and
the hyperplane $P^\perp$ meets $\q^-_3$ in $1$ or $q+1$ points.

Assume that $|P^\perp \cap \q^-_3|=1$. Then there exists exactly one line $l \in \cL$ such that $l
\in P^\perp$. Recall that $l^\perp \cap \sigma_1$ is a tangent plane of $\q^-_3$. Then $|P^\perp
\cap \q^-_3| = 1$ implies $P^\perp \cap \sigma_1 = l^\perp \cap \sigma_1$. Hence $P^\perp \cap
\q^+_7$ contains the $3$-space $\langle l, P^\perp \cap \sigma_1 \rangle$, a contradiction since
$\p^\perp \cap \q^+_7$ is a parabolic quadric $\q(6,q)$.

Before we proceed to the next case, we will show that there exists a third generator $\sigma_3$ of
$\q^+_7$ disjoint from $\sigma_1$ and $\sigma_2$, which meets all lines of $\cL$. Let $l \in \cL$
and let $P \in l \setminus \{ \sigma_1, \sigma_2 \}$. By Lemma \ref{le:thirdgenerator1}, the points
$\cP = \{ P \} \cup \{ P^\perp \cap l': l' \in \cL \setminus \{ l \}\}$ are pairwise collinear.
Hence, there exists a generator $\sigma_3$ which contains $\cP$.

Now assume that $|P^\perp \cap \q^-_3|=q+1$. Consider the three planes $P^\perp \cap \sigma_1$,
$P^\perp \cap \sigma_2$, and $P^\perp \cap \sigma_3$, which we also note are three planes of $\q_6 =
P^\perp \cap \q^+_7$. There are exactly $q+1$ lines meeting these three planes in a point by
Lemma~\ref{le:threelines}, and these lines span a $3$-space $\alpha$ meeting $\q_6$ in a hyperbolic
quadric $\q^+(3,q)$. Since $\alpha \cap \sigma_1 \cap P^\perp$ is a line, meeting $\q^-_3$ in $0,1$
or $2$ points, if a line $l \in \cL$ is contained in $P^\perp$, then $\alpha \cap \sigma_1 \cap
P^\perp$ meets $\q^-_3$. Hence, if $P^\perp$ contains exactly one line $l \in \cL$, $l' := P^\perp
\cap l^\perp \cap \sigma_1$ is a tangent line to the conic $\q^-_3 \cap P^\perp$. Consider $r \in
(\sigma_1 \cap l^\perp) \setminus P^\perp$ Then $\langle l',l \rangle$ is a plane contained in
$R^\perp / R$, a quotient geometry isomorphic with a parabolic quadric $\q(4,q)$, a contradiction.
Hence, for each point $P \in \p$, there are exactly two lines $l_i \in \cL$ such that $l_i \in
P^\perp$.

Count now the pairs $\{(l,P): l \in \cL, P \in \p \cap \cO\}$. There are $k := |\cO \cap \p|$
choices for $P$. By the above argument, there are $2$ choices for $l$ given a point $P \in \p$, so
we find $2k$ pairs. On the other hand, there are $q^2+1$ choices for a line $l \in \cL$. For each
line $l \in \cL$, $l^\perp /l$ is a quotient geometry isomorphic to $\q^+(5,q)$, so $l^\perp$
contains exactly $q^2+1$ points of $\cO$, always including $P_1,P_2$. Hence there are
$(q^2+1)(q^2-1) = q^4-1$ pairs, so $2k=q^4-1$. With $k$ a natural number, this is a contradiction
when $q$ is even.
\end{proof}

We conclude this part with two short remarks.

\begin{remark}
  We have to clearify the connection of the given proof with tight sets. Consider the situation of
  the previous proof, and the weighted tight set that consists of all generators that contain a line
  of $\cL$, but not $P_1$ or $P_2$. There are $2(q+1)(q^2+1)(q^2-1)$ such generators. Hence, we
  obtain a weighted $2(q+1)(q^2+1)(q^2-1)$-tight set as every line of $\cL$ is contained in
  $2(q+1)(q^2-1)$ such generators. By the arguments of the previous proof, we have weight $4(q+1)$
  on the points of $\cP$, but the other points of the tight set have either weight $0$ or are in
  $P_1^\perp \cup P_2^\perp$. Hence,
  \begin{align*}
    4(q+1)|\cP \cap \cO| = 2(q+1)(q^2+1)(q^2-1).
  \end{align*}
  The proof is based on a simplification of this tight set.
\end{remark}

\begin{remark}
  For $q$ even, there exists a natural embedding $\q^-(3, q) \subseteq \w(3, q) \subseteq \h(3,
  q^2)$. By field reduction one can map $\h(3, q^2)$ onto $\q^+(7, q)$. So $\w(3, q)$ is mapped onto
  the Segre variety $\cS_{3,1} = \w(3, q) \otimes \w(1, q)$. The image of $\q^-(3, q)$ gives us the
  $q^2+1$ lines $\cL \subseteq \cS_{3, 1}$.
\end{remark}

\section{The non-existence proof of ovoids of $\h(5,4)$}\label{sec:h54}

Recall that an ovoid of $\h(5, q^2)$ has size $q^5+1$. A subspace $\pi$ of $\pg(5,q)$ meets the
Hermitian variety $\h(5,q^2)$ in a set of points of a (possibly) degenerate Hermitian variety, in
other words, the underlying Hermitian form induces a (possibly) degenerate Hermitian form on the
subspace $\pi$. Such a degenerate Hermitian variety is a cone with vertex a subspace $\alpha$ of
$\pi$ and base a non-degenerate Hermitian variety in the complement of $\alpha$ in $\pi$. When we
say, for example, that a line $l$ is \emph{isomorphic} to $\h(1,q^2)$, it means that the line meets
$\h(5,q^2)$ in the point set of the Hermitian variety $\h(1,q^2)$. Likewise, when we say, a plane
$\pi$ is isomorphic to $p\h(1,q^2)$, we mean that $\pi$ meets $\h(5,q^2)$ in the cone $p \h(1,q^2)$,
$p \in l$ and $\h(1,q^2) \in p^\perp$. The following lemma could be a useful observation in the
study of ovoids of Hermitian polar spaces, as it lead to insight into the case $q=2$ of this paper.

\begin{lemma}\label{lemma:line_with_two_pts}
  Let $\cO$ be an ovoid of $\h(5, q^2)$. Then there exists a line $\ell$ isomorphic to $\h(1, q^2)$
  with $2 \le |\ell \cap \cO| < q+1$.
\end{lemma}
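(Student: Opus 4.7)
The plan is to argue by contradiction: suppose every non-degenerate Hermitian line $\ell$ with $|\ell \cap \cO| \ge 2$ in fact satisfies $|\ell \cap \cO| = q+1$. Since two ovoid points cannot lie on a totally isotropic line (they would lie on a generator together), any two $\cO$-points $P, Q$ span a non-degenerate Hermitian line, and by the standing assumption all $q+1$ points of $PQ \cap \h(5,q^2)$ lie in $\cO$. Thus $\cO$ is ``closed under chord-filling''.

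The first geometric step is to analyse any plane $\pi = \langle P_1, P_2, P_3 \rangle$ spanned by three non-collinear points of $\cO$ and show that $\pi \cap \h(5, q^2)$ is the non-degenerate unital $\h(2, q^2)$. I would rule out each degenerate possibility for $\pi \cap \h$ using the ovoid axiom together with chord-closure: a totally singular plane (generator) would share three points with $\cO$; a ``double line'' intersection would force the three points to be collinear on a totally isotropic line; and in the cone case $p\,\h(1, q^2)$, the vertex $p$ cannot lie in $\cO$ (else $\cO$ meets a totally isotropic line through $p$ twice), yet the chord $P_1P_2 \subseteq \cO$ already contributes one $\cO$-point on each of the $q+1$ totally isotropic lines through $p$, leaving no totally isotropic line free to carry $P_3$.

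Inside such a non-degenerate plane, $\cO \cap \pi$ is a closed subset of the unital: every block of $\h(2, q^2)$ meeting it in $\ge 2$ points is fully contained. Fisher's inequality applied to the resulting $2$-$(v, q+1, 1)$ design gives $|\cO \cap \pi| \le q+1$ (collinear) or $|\cO \cap \pi| \ge q^2+q+1$. The key structural step --- and the main obstacle --- is to upgrade this to $|\cO \cap \pi| = q^3+1$, i.e.\ $\cO \cap \pi = \pi \cap \h$. For a Baer-subplane closed subset of size $q^2+q+1$, I would use a parity argument: the Baer involution fixing the subplane pointwise preserves the unital setwise and pairs up the remaining $q^3-q^2-q$ points of $\h(2, q^2)$, which is incompatible with $q$ being odd; for $q=2$ the obstruction is elementary, since $\h(2,4)$ has the design structure of $\mathsf{AG}(2,3)$, whose only subspaces are points, lines, and the whole plane, so no $7$-point (Fano) sub-design fits. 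Intermediate sizes strictly between $q^2+q+1$ and $q^3+1$ have to be ruled out in a similar spirit, and this is the most delicate part of the argument.

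Once $|\cO \cap \pi| = q^3+1$ for every such $\pi$, a double count finishes the proof. Fix $P \in \cO$: there are $q^4$ chords of $\cO$ through $P$, and any two of them span a ``special'' plane which then contains exactly $q^2$ chords through $P$ (the $q^2$ secants to $\h(2,q^2)$ through $P$ in $\pi$); hence the number of special planes through $P$ equals $\binom{q^4}{2}/\binom{q^2}{2} = q^2(q^2+1)$. Counting incidences (special plane, $\cO$-point in it) in two ways yields
\[
N(q^3+1) = (q^5+1)\, q^2(q^2+1),
\]
so $N = q^2(q^2+1)(q^5+1)/(q^3+1)$. Reducing modulo $q^3+1 = (q+1)(q^2-q+1)$ using $q^3 \equiv -1$, the numerator collapses to $q^2-1$, which is strictly between $0$ and $q^3+1$ for every $q \ge 2$; so $N$ is not an integer, the desired contradiction.
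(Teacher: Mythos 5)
Your outline is genuinely different from the paper's proof, which never analyses planes: the paper splits into three cases according to how the Baer sublines of $\cO$ sit relative to one another, and kills the main case (two Baer sublines $\ell,\ell'$ spanning a solid) by noting that the $(q+1)^2$ filled chords joining them yield $2(q+1)+(q+1)^2(q-1)=q^3+q^2+q+1$ ovoid points inside $\langle\ell,\ell'\rangle\cong\h(3,q^2)$, so that any non-ovoid point $P\in\langle\ell,\ell'\rangle^\perp$ has $|P^\perp\cap\cO|\ge q^3+q^2+q+1>q^3+1$, contradicting the tight-set count coming from Lemma \ref{lemma:singular_line_tightset}. Several of your steps are correct and worth keeping: two ovoid points are never collinear in the polar space, so every chord is a hyperbolic line and chord-closure holds; your elimination of the degenerate plane sections (generator, single line, cone $p\h(1,q^2)$) is valid; and the closing arithmetic is right, since $q^2(q^2+1)(q^5+1)\equiv q^2-1\pmod{q^3+1}$.

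There is, however, a genuine gap at exactly the step you flag as delicate, and it is the crux rather than a technicality. Your double count needs $\cO\cap\pi$ to be the \emph{entire} unital $\pi\cap\h(5,q^2)$ for every plane $\pi$ spanned by a triangle of ovoid points; equivalently, that the Hermitian unital, as a linear space whose blocks are the secant Baer sublines, has no proper block-closed subset containing a triangle. Fisher's inequality only gives $|\cO\cap\pi|\ge q^2+q+1$. Your proposed upgrade fails on two counts. First, a block-closed set of size $q^2+q+1$ is an abstract projective plane of order $q$ whose lines happen to be Baer sublines; you have not shown it is a Baer subplane of $\pg(2,q^2)$, nor that the associated Baer involution stabilises the Hermitian curve (it fixes the subplane pointwise, but a priori sends $\h(2,q^2)$ to a different Hermitian curve through those points), so the parity argument is not yet available, and it says nothing for even $q>2$ in any case. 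Second, and more seriously, all sizes strictly between $q^2+q+1$ and $q^3+1$ are left untreated; ``in a similar spirit'' is not an argument, and classifying block-closed subsets of the Hermitian unital is a substantive problem in its own right. Until that generation statement is proved or cited, the identity $N(q^3+1)=(q^5+1)q^2(q^2+1)$ is unjustified. The repair I would suggest is to abandon the planar analysis in favour of the solid argument above: it uses only chord-closure, an elementary count, and the fact that $|P^\perp\cap\cO|=q^3+1$ for $P\notin\cO$.
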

\begin{proof}
  Assume that all lines isomorphic to $\h(1, q^2)$ meet $\cO$ in $0$, $1$, or $q+1$ points. Call a
  line $\ell$ a \emph{Baer subline} of $\cO$ if $|\ell \cap \cO| = q+1$.

\begin{description}
\item[Case 1. All Baer sublines of $\cO$ are contained in a plane] This plane is necessarily
  isomorphic to $\h(2, q^2)$, however, $|\h(2, q^2)| = q^3+1 < |\cO|$; a contradiction.

\item[Case 2. Two Baer sublines $\ell, \ell'$ of $\cO$ span a solid] Consider the following set of
  $(q+1)^2$ lines, each isomorphic to $\h(1, q^2)$:
  \begin{align*}
    \cL = \{ PQ: P \in \ell \cap \cO, Q \in \ell' \cap \cO\}.
  \end{align*}
  Let $s \in \cL$. Since $s \cap \ell \in \cO$ and $s \cap \ell' \in \cO$, we have $|s \cap \cO| \ge
  2$. Hence, $|s \cap \cO \setminus (\ell \cup \ell')| = q-1$. Let $s' \in \cL$ with $s' \neq s$, $s
  \cap \ell \neq s' \cap \ell$, and $s \cap \ell' \neq s' \cap \ell'$. If $s \cap s'$ is not empty,
  then $\ell, \ell' \subseteq \langle s \cap \ell, s' \cap \ell, s \cap \ell', s' \cap \ell'
  \rangle$ is a plane. This contradicts our assumption that $\ell$ and $\ell'$ span a solid. Hence,
  \begin{align*}
   \left| \bigcup_{s \in \cL} s \cap \cO \right|  &= |\ell \cap \cO| + |\ell' \cap \cO| + \bigcup_{s \in \cL} | s \cap \cO \setminus (\ell \cup \ell')|\\
    &= 2(q+1) + (q+1)^2 (q-1)\\
    &= q^3+q^2+q+1.
  \end{align*}
  The subspace $S = \langle \ell, \ell' \rangle$ is isomorphic to $\h(3, q^2)$. Hence, $S^\perp$ is
  isomorphic to $\h(1, q^2)$ and contains a point $P \in \h(5, q^2) \setminus \cO$. Then $|P^\perp
  \cap \cO| \ge q^3+q^2+q+1$. This contradicts Lemma \ref{lemma:singular_line_tightset}, that
  implies $|P^\perp \cap \cO| = q^3+1$.

\item[Case 3. All Baer sublines of $\cO$ meet in a point $x$ of $\pg(5, q^2)$ and are not
  contained in a plane] Let $\ell, \ell', \ell''$ be Baer sublines of $\cO$ that span a $3$-space.
  Choose $y \in \ell$, $z \in \ell'$ with $x \neq y, z$. Then $yz$ is isomorphic to $\h(1, q^2)$
  with $|yz| \ge 2$. Hence, $yz$ is a Baer subline of $\cO$ with $x \notin yz$; a contradiction.
\end{description}
\end{proof}

\begin{lemma}[{\cite[Theorem 8]{Bamberg2007}}]\label{lemma:tightset_w5}
Let $r>1$ and let $W_{r}$ be a subgeometry of $\h(2r-1,q^2)$ isomorphic to $\w(2r-1,q)$. Then the
set of points of $W_r$ is a $(q+1)$-tight set of $\h(2r-1,q^2)$.
\end{lemma}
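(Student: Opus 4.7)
The plan is to verify directly that $\chi_{W_r}$ achieves equality in the right-hand (tight-set) inequality of Lemma~\ref{lemma:equivalence_intriguingset_definition} with $j\chi_{W_r}^\top = (q+1)(1-k/\negev)$. Using the parameters from Section~3 with rank $d=r$, type $e=1/2$, and underlying field of order $q^2$, one has $\posev = q^{2r-2}-1$ and $\negev = -1-q^{2r-3}$, and a quick manipulation gives $1-k/\negev = (q^{2r}-1)/(q^2-1)$. Since $|W_r| = (q^{2r}-1)/(q-1) = (q+1)(q^{2r}-1)/(q^2-1)$, the cardinality of $W_r$ is exactly that required of a $(q+1)$-tight set.

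The key geometric input is an intersection count. Because $W_r$ is a subgeometry of $\h(2r-1,q^2)$ isomorphic to $\w(2r-1,q)$, two points of $W_r$ are collinear with respect to the Hermitian form if and only if they are collinear with respect to the symplectic form on $\w(2r-1,q)$. Hence for $P\in W_r$, the set $P^\perp\cap W_r$ is the tangent hyperplane section of $\w(2r-1,q)$ at $P$; since every point of the ambient $\pg(2r-1,q)$ is a point of $\w(2r-1,q)$, this section is a full $\pg(2r-2,q)$ and so has $(q^{2r-1}-1)/(q-1)$ points, independently of $P\in W_r$.

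Summing over $P \in W_r$ yields
\[
\chi_{W_r}A\chi_{W_r}^\top \;=\; |W_r|\Bigl(\tfrac{q^{2r-1}-1}{q-1}-1\Bigr) \;=\; \frac{q(q^{2r}-1)(q^{2r-2}-1)}{(q-1)^2}.
\]
I would then substitute the parameters above together with $n=(q^{2r-1}+1)(q^{2r}-1)/(q^2-1)$ and $k=q^2(q^{2r-3}+1)(q^{2r-2}-1)/(q^2-1)$ into the upper bound $(j\chi_{W_r}^\top)^2 k + \posev(n\chi_{W_r}\chi_{W_r}^\top - (j\chi_{W_r}^\top)^2)$ of Lemma~\ref{lemma:equivalence_intriguingset_definition}, using $\chi_{W_r}\chi_{W_r}^\top = j\chi_{W_r}^\top = |W_r|$, and verify algebraically that equality is attained. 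Lemma~\ref{lemma:equivalence_intriguingset_definition} then places $\chi_{W_r}$ in $\langle j\rangle \perp V_+$; since $\chi_{W_r}$ is a $0$-$1$-vector with $j\chi_{W_r}^\top = (q+1)(1-k/\negev)$, $W_r$ is a $(q+1)$-tight set.

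The only non-routine step is the subgeometry identification used in the intersection count. I do not foresee a serious obstacle, as the coincidence of $W_r$-collinearity with the inherited Hermitian collinearity is built into the definition of $W_r$ as a subgeometry isomorphic to $\w(2r-1,q)$; if one prefers an explicit model, one can realise $W_r$ as the fixed point set of the Baer involution $\phi$ of $\pg(2r-1,q^2)$ coming from a $\gf(q)$-rational Gram matrix of the Hermitian form, and verify that $\phi$ preserves $\h(2r-1,q^2)$ and commutes with its polarity. Everything else is a routine arithmetic verification.
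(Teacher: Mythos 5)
Your proof is correct, but note that the paper itself offers no proof of this lemma at all: it is imported verbatim as \cite[Theorem~8]{Bamberg2007}, so there is nothing internal to compare against. Your argument — compute $|W_r|=\frac{q^{2r}-1}{q-1}=(q+1)\bigl(1-\frac{k}{\negev}\bigr)$, observe that each $P\in W_r$ satisfies $|P^\perp\cap W_r|=\frac{q^{2r-1}-1}{q-1}$ because Hermitian collinearity restricted to the subgeometry is symplectic collinearity and $\w(2r-1,q)$ contains every point of its ambient $\pg(2r-1,q)$, and then check equality in the upper bound of Lemma~\ref{lemma:equivalence_intriguingset_definition} — is exactly the standard route and the arithmetic does close (one checks $x(k-\posev)=\frac{n(q^{2r-2}-1)}{q-1}$ with $x=\frac{q^{2r}-1}{q-1}$, which holds with the stated parameters for rank $r$, type $e=1/2$, field order $q^2$). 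The one inaccuracy is in your optional aside: the fixed-point set of the Baer involution attached to a $\gf(q)$-rational Gram matrix of the Hermitian form is \emph{not} a symplectic subgeometry for odd $q$ — a rational Hermitian Gram matrix is symmetric, so that involution cuts out an orthogonal (quadric) sub-polar-space. The correct explicit model takes the Hermitian form $h(x,y)=\epsilon\, x^\top A\, y^{(q)}$ with $A$ alternating over $\gf(q)$ and $\epsilon^q=-\epsilon$; then every $\gf(q)$-rational point is singular and $h$ restricts to $\epsilon$ times the symplectic form, which is precisely the collinearity coincidence your main argument needs. Since that aside is not load-bearing (the coincidence is built into the meaning of ``subgeometry isomorphic to $\w(2r-1,q)$''), the proof stands.
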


\begin{lemma}\label{lemma:degenerateplane}
  Let $\cO$ be an ovoid of $\h(5, 4)$. Then there exist $P, Q, R \in \cO$ such that $\langle P, Q, R
  \rangle$ is isomorphic to the degenerate Hermitian space $p\h(1, q^2)$.
\end{lemma}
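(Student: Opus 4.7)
My plan is to apply Lemma~\ref{lemma:line_with_two_pts} with $q = 2$ to obtain a line $\ell$ isomorphic to $\h(1, 4)$ with $\ell \cap \cO = \{P, Q\}$, and then to build the required third ovoid point $R$ using the unique remaining $\h$-point $X$ of $\ell$. The key observation is that although $P^\perp \cap \cO = \{P\}$ is too sparse to be useful, the tangent hyperplane $X^\perp$ at the \emph{non-ovoid} point $X$ is rich in ovoid points, supplying many candidates for $R$.

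First I would record the basic orthogonality data: since $\ell$ is secant (hence non-degenerate), one has $\ell \cap \ell^\perp = \emptyset$, so no two of $P, Q, X$ are orthogonal; in particular $P, Q \notin X^\perp$ and $\ell \cap X^\perp = \{X\}$. Next, slicing $\cO$ from $X$ via Lemma~\ref{lemma:slicing} (or, equivalently, combining Lemma~\ref{lemma:singular_line_tightset} at $s = 0$ with Lemma~\ref{lemma:intersection_tight_set_ovoid}) gives $|X^\perp \cap \cO| = q^3 + 1 = 9$. Picking any $R$ in this intersection, the orthogonality remarks above yield $R \notin \{P, Q\}$, and since $X \notin \cO$ we also have $R \notin \ell$.

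The decisive construction is the auxiliary point $Y := \langle X, R \rangle \cap \ell^\perp$. The line $\langle X, R \rangle$ is totally isotropic (both endpoints lie in $\h(5, 4)$ and are mutually orthogonal), so it is contained in $\h(5, 4)$; it meets $\ell^\perp$ in a unique point $Y$ because $X \notin \ell^\perp$, and $Y \in \h(5, 4)$ because the line is. I claim $Y$ is the vertex of the desired cone plane. Indeed, the plane $\pi := \langle P, Q, R \rangle = \langle \ell, R \rangle$ contains $Y$ (via $Y \in \langle X, R \rangle \subseteq \pi$); moreover $Y \perp \ell$ by construction, $Y \perp R$ because both lie on the isotropic line $\langle X, R \rangle$, and $Y \perp Y$ because $Y$ is isotropic, so $Y$ lies in the radical of the Hermitian form on $\pi$. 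A dimension count then pins the radical down to $\{Y\}$: it is contained in $\pi \cap \ell^\perp$, which cannot be a line, since any such line would meet $\ell$ in a point of $\ell \cap \ell^\perp = \emptyset$. Consequently the form on $\pi$ has rank $2$ with vertex $Y$, giving $\pi \cap \h(5, 4) = Y\h(1, 4) = p\h(1, q^2)$, as required.

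Of the steps above, the only one that calls on non-trivial machinery is the count $|X^\perp \cap \cO| = q^3 + 1$; this is immediate from the slicing lemma once one identifies the quotient $X^\perp / X$ with $\h(3, 4)$ (whose ovoids have size $q^3 + 1$). The rest of the argument is a routine calculation of perpendiculars and radicals inside the single plane $\pi$.
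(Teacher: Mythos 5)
Your proof is correct, but it takes a genuinely different route from the paper's. The paper embeds the secant line $\ell$ supplied by Lemma~\ref{lemma:line_with_two_pts} in a Baer subgeometry $W_5$ isomorphic to $\w(5,2)$ and invokes the $(q+1)$-tight set property of $W_5$ (Lemma~\ref{lemma:tightset_w5}) to produce the third ovoid point; the degeneracy of the resulting plane is then automatic because $\w(5,2)$ carries no non-degenerate plane sections. You instead find the third point $R$ inside the tangent hyperplane $X^\perp$ at the unique non-ovoid singular point $X$ of $\ell$, using only the slicing lemma, and then verify the degeneracy of $\langle \ell, R\rangle$ by exhibiting its radical $Y = \langle X,R\rangle \cap \ell^\perp$ explicitly. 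This buys two things: it bypasses Lemma~\ref{lemma:tightset_w5} and the symplectic subgeometry entirely, and, unlike the paper's argument (which uses $q=2$ to force the three points of $W_5\cap\cO$ into a plane), it works verbatim in $\h(5,q^2)$ for arbitrary $q$, since Lemma~\ref{lemma:line_with_two_pts} always provides a singular point of $\ell$ outside $\cO$. One sentence needs tightening: you justify only the uniqueness of $Y$ (``because $X \notin \ell^\perp$''), not its existence, and a line and a solid in $\pg(5,q^2)$ are generically disjoint. Existence holds because $R \in X^\perp = \langle X, \ell^\perp\rangle$, so the line $\langle X,R\rangle$ lies in the $4$-space $X^\perp$, in which $\ell^\perp$ is a hyperplane; this is a one-line fix and does not affect the rest of the argument.
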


\begin{proof}
  By Lemma \ref{lemma:line_with_two_pts}, there exists a line $\ell$ isomorphic to $\h(1, 4)$ with
  $|\ell \cap \cO| = 2$. Let $W_5$ be a subgeometry of $\h(5, 4)$ isomorphic to $\w(5, 2)$ with
  $\ell \subseteq W_5$. By Lemma \ref{lemma:tightset_w5}, $|W_5 \cap \cO| = 3$. Hence, $\pi =
  \langle W_5 \cap \cO \rangle$ is a plane. The plane $\pi$ is isomorphic to $p\h(1, 4)$ as $\w(5,
  2)$ does not contain non-degenerate planes.
\end{proof}

For the rest of this section, define the following objects.
\begin{enumerate}[-]
\item Let $P$ be a point of $\h(5, 4)$.
\item Let $\ell \subseteq P^\perp$ be a line of $\h(5, 4)$ isomorphic to $\h(1, 4)$.
\item Let $\cW_2$ be the set of $(q^2-1)/(q-1) = 3$ subgeometries $W_2$ of $P\ell$ isomorphic to
  $p\w(1, 2)$ with $\ell \cap H(5, 4) \subseteq W_2$.
\item Let $W_5$ be a subgeometry of $H(5, 4)$ isomorphic to $W(5, 2)$ with $W^0_2 \subseteq W_5$
  for one fixed $W_2^0 \in \cW_2$.
\item Define $W_3 = W_5 \cap \ell^\perp$.
\item Fix a subgeometry $Q^-_3$ isomorphic to $\q^-(3, 2)$ in $W_3$ with $P \in Q^-_3$.
\item Let $U$ be the intersection of the setwise stabiliser of $W_5$, the element-wise stabiliser
  of $\cW_2$, and the setwise stabiliser of $Q^-_3$. The group $U$ has size $144$.
\end{enumerate}

Let $W_2 \in \cW_2$ with $W_2 \notin W_5$. Let $\cQ^-_3$ be $\{ Q^-_3, W_3 \setminus (P^\perp \cup
Q^-_3) \}$; a partition of the points of $W_3 \setminus P^\perp$. Let $S \in \cQ^-_3$ and let
$O_{W_2, S}$ be the set of points $Q$ of $\h(5, 4)$ with
\begin{enumerate}[-]
 \item $Q \notin W_5$, $Q \notin P^\perp$,
 \item $Q^\perp \cap \ell$ is a singular point $R$ of $\h(5, 4)$,
 \item $|QR \cap S| = 1$,
 \item $Q^\perp \cap P\ell \cap W_2 \cong \w(1, 2)$.
\end{enumerate}

Let $\cO$ be an ovoid of $\h(5, 4)$. By Lemma \ref{lemma:degenerateplane}, we suppose without loss
of generality $|W_0 \cap \cO| = q+1$ and $|\ell \cap \cO| = 0$. By Lemma \ref{lemma:tightset_w5},
$|W_5 \cap \cO \setminus P\ell| = 0$. Under this assumption, the equations defined by Lemma
\ref{lemma:intriguingsets_from_subgroups} with $U$ as the group in question, imply
\begin{align*}
  |O_{W_2, S} \cap \cO| = \frac{3}{2}.
\end{align*}
This is clearly a contradiction. So by Lemma \ref{lemma:slicing}, we have.

\begin{theorem}
  The Hermitian polar spaces $\h(2n-1, 4)$ do not possess ovoids for all $n\ge 3$.
\end{theorem}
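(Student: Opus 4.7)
The plan is to prove the case $n=3$ — that $\h(5,4)$ has no ovoid — and then extend to all $n \ge 3$ by Lemma~\ref{lemma:slicing}, since iterated projection from a non-ovoid point of an ovoid of $\h(2n-1,4)$ would eventually yield an ovoid of $\h(5,4)$.

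For the base case, suppose $\cO$ is an ovoid of $\h(5,4)$. First combine Lemmas~\ref{lemma:line_with_two_pts} and~\ref{lemma:degenerateplane} to locate three ovoid points spanning a degenerate plane $p\h(1,4)$. By the transitivity of the collineation group on such configurations, normalise so that the cone vertex of this plane is the distinguished point $P$ of the fixed skeleton, so that the three ovoid points lie on the line through $P$ inside the Baer subgeometry $W_2^0$, and so that the distinguished line $\ell \subseteq P^\perp$ is disjoint from $\cO$. Lemma~\ref{lemma:tightset_w5} then pins down $W_5 \cap \cO$ exactly as those three points and nothing else.

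With the skeleton $(P,\ell,W_5,Q^-_3)$ fixed, I would invoke Lemma~\ref{lemma:intriguingsets_from_subgroups} with the stabiliser group $U$ of order $144$, applied to the standard weighted tight sets of Example~\ref{example:std_tightset} and Lemma~\ref{lemma:singular_line_tightset} attached to carefully chosen totally isotropic subspaces living in the skeleton. Each $U$-averaged vector is a weighted tight set whose inner product with $\chi_\cO$ is determined purely by its weight $i$, via Lemma~\ref{lemma:intersection_tight_set_ovoid}. Taking a rational linear combination whose support reduces to a single set $O_{W_2,S}$ of the type described at the end of the section yields the equation $|O_{W_2,S} \cap \cO| = 3/2$, impossible for a $0$-$1$-vector. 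This contradiction disposes of $\cO$.

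The main obstacle is the orbit-counting bookkeeping. One must identify the $U$-orbits on $\h(5,4) \setminus (P^\perp \cup W_5)$, verify that $O_{W_2,S}$ is a union of $U$-orbits of the correct cardinality, and then decompose $\chi_{O_{W_2,S}}$ as an explicit rational linear combination of $U$-translates of the standard intriguing sets. The half-integer $3/2$ — rather than merely a non-integer — arises because a factor of $2$ in $|U|=144$ survives cancellation against the orbit sizes, while the geometric incidence counts contributing to $|O_{W_2,S} \cap \cO|$ turn out to be odd. This is precisely where $q=2$ is essential: the analogous computation for larger $q$ need not produce a non-integer, which explains why the result is only obtained in the case $q=2$ and why the method leaves open the case $q > 2$.
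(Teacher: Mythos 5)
Your proposal follows essentially the same route as the paper: reduce to $\h(5,4)$ by Lemma~\ref{lemma:slicing}, use Lemmas~\ref{lemma:line_with_two_pts}, \ref{lemma:tightset_w5} and \ref{lemma:degenerateplane} to normalise the three ovoid points into the degenerate plane of the fixed skeleton, and then apply Lemma~\ref{lemma:intriguingsets_from_subgroups} with the order-$144$ group $U$ to produce a weighted tight set whose pairing with $\chi_\cO$ forces $|O_{W_2,S}\cap\cO|=3/2$. The only cosmetic difference is that in the paper's explicit computation (appendix) the tight set's support is not literally reduced to $O_{W_2,S}$; rather, all other orbits in its support are shown to meet $\cO$ in known cardinalities, which amounts to the same final equation.
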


We provide a coordinatised description of $U$ and the special weighted tight set above in the
appendix. It is also possible to provide a geometrical description of the involved weighted tight
sets without too much effort. As this argument heavily relies on Lemma \ref{lemma:degenerateplane},
so $q=2$, we see no point in doing so. We do hope that it will be possible to generalise the given
construction in the future.

\section{The non-existence results of Andreas Klein and some improvements}\label{sec:AKlein}

Andreas Klein showed in \cite{Klein2004} the non-existence of ovoids in $\h(2d-1, q^2)$ if $d >
q^3+1$. This result shows the non-existence of ovoids in certain cases where the result of Moorhouse
in \cite{Moorhouse1996} does not show it and vice versa. The approach in \cite{Moorhouse1996} is
based on the computation of the $p$-rank of a generator matrix associated to a hypothetical ovoid,
while the approach in \cite{Klein2004} is purely combinatorial. With the approach followed in this
paper we can now improve Klein's result.

\begin{proposition}\label{propn:h2tightset}
  Let $d > 2$. Let $H_s$ be a subgeometry of $\h(2d-1, q^2)$ isomorphic to $\h(s, q^2)$, $1 < s <
  2d-2$, $s$ even. Then
  \begin{align*}
    (1-q^{2d-2-s}) \chi_{H_s} + (1-q^{s}) \chi_{H_s^\perp} + \chi_{C(H_s)},
  \end{align*}
  is a weighted $(q^s-1)(q^{2d-2-s}-1)$-tight set.
\end{proposition}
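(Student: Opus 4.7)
My plan is to show that $\psi := (1 - q^{2d-2-s})\chi_{H_s} + (1-q^s)\chi_{H_s^\perp} + \chi_{C(H_s)}$ is a nonzero scalar multiple of a weighted tight set built explicitly from Lemma \ref{lemma:singular_line_tightset}. I interpret $C(H_s)$ as the set of points of $\h(2d-1,q^2)$ lying on a totally isotropic \emph{transversal} line $\ell = Q_1 Q_2$ with $Q_1 \in H_s$ and $Q_2 \in H_s^\perp$, after excluding $H_s \cup H_s^\perp$; since $\langle H_s \rangle$ and $\langle H_s^\perp \rangle$ are complementary, each point of $C(H_s)$ lies on a unique transversal. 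Consider
\[
\psi' := \sum_{\ell} \left[(q^{2d-5}+1)\chi_\ell + \chi_{\ell^\sim}\right] - |H_{s-1}| \cdot |H_{s-1}^\perp| \cdot j,
\]
where the sum runs over all transversals and $|H_{s-1}|$ (resp.\ $|H_{s-1}^\perp|$) is the size of a non-degenerate hyperplane section of $H_s$ (resp.\ $H_s^\perp$). Each summand in the sum is a weighted tight set by Lemma \ref{lemma:singular_line_tightset} (applied to the line $\ell$ in the rank $d$, type $e = 1/2$ polar space $\h(2d-1,q^2)$), and $j$ is trivially a weighted tight set, so $\psi'$ is a weighted tight set.

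Next I would use the orbit structure: the setwise stabiliser of $H_s$ in $\mathsf{P\Gamma U}(2d,q)$ has exactly four orbits on the singular points of $\h(2d-1,q^2)$, namely $H_s$, $H_s^\perp$, $C(H_s)$, and the \emph{mixed} orbit $M$ consisting of singular points $X = X_1 + X_2$ with $X_1 \in \langle H_s \rangle$ and $X_2 \in \langle H_s^\perp \rangle$ both non-singular (this is a single orbit because $(f(X_1,X_1), f(X_2,X_2))$ is determined up to a common $\gf(q)^*$-scaling by the norm map, with $f(X_1,X_1) + f(X_2,X_2) = 0$ pinning the ratio). Thus $\psi'$ is constant on each orbit and I can evaluate it orbit by orbit using just two kinds of counts: the number of transversals through a given point, and the number of transversals $\ell$ with $P \in \ell^\sim$. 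Both reduce to the tangent cone sizes $X := 1 + q^2|H_{s-2}|$ and $Y := 1 + q^2|H_{s-2}^\perp|$ (when the hyperplane is tangent) or to $|H_{s-1}|, |H_{s-1}^\perp|$ (when it is not), and the subtraction $|H_{s-1}||H_{s-1}^\perp| \cdot j$ is precisely the one that forces $\psi'|_M = 0$.

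The final step is to verify that the three remaining orbit values of $\psi'$ are proportional to $(1 - q^{2d-2-s},\, 1 - q^s,\, 1)$ with a common nonzero scalar $C$. This rests on the identities $|H_{s-1}| - X = q^{s-1}$ and $|H_{s-1}^\perp| - Y = q^{2d-s-3}$, which drop out of the standard point-count formulas for Hermitian polar spaces; feeding these back into the expressions for $\psi'|_{H_s}, \psi'|_{H_s^\perp}, \psi'|_{C(H_s)}$ yields the proportionality. Once $\psi = C^{-1}\psi'$ is established, $\psi$ is a weighted tight set, and its weight is determined by $i \cdot (q^{2d}-1)/(q^2-1) = j\psi^\top$, which after substituting the standard formulas for $|H_s|, |H_s^\perp|, |C(H_s)| = (q^2-1)|H_s||H_s^\perp|$ simplifies neatly to $i = (q^s-1)(q^{2d-2-s}-1)$.

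The main obstacle is this last proportionality check: each orbit value of $\psi'$ is a clean sum but the collapse to the exact ratio $(1 - q^{2d-2-s}) : (1 - q^s) : 1$ takes some patient algebra. An essentially equivalent alternative, which I would use if the direct computation proves too cumbersome, is to invoke Proposition \ref{propn:characterisation_by_intersection} with the standard weighted $m$-ovoid $\chi = -(q^{2d-2}-1)\chi_X + \chi_{X^\sim}$ and verify $\chi^g \psi^\top = m(q^s-1)(q^{2d-2-s}-1)$ on each of the four orbits of $X$; the underlying combinatorial inputs are the same.
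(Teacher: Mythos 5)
Your primary construction is correct, and it is genuinely different from the paper's argument. The paper proves the proposition via Proposition \ref{propn:characterisation_by_intersection}: it pairs the candidate vector with the standard weighted ovoid $(1-q^{2d-2})\chi_{\{P\}}+\chi_{P^\sim}$ for $P$ in each of the four point classes ($H_s$, $H_s^\perp$, $C(H_s)$ and the rest) and checks that the inner product equals $(q^s-1)(q^{2d-2-s}-1)\tfrac{q^{2d-2}-1}{q^2-1}$ every time --- this is precisely the ``essentially equivalent alternative'' you name in your last paragraph, carried out by tabulating $|P^\sim\cap H_s|$, $|P^\sim\cap H_s^\perp|$ and $|P^\sim\cap C(H_s)|$ in each case (about a dozen counts, the $C(H_s)$ ones being the fiddliest). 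Your main route instead exhibits the vector, up to a nonzero scalar, as $\sum_{\ell}\bigl[(q^{2d-5}+1)\chi_\ell+\chi_{\ell^\sim}\bigr]-|H_{s-1}|\,|H_{s-1}^\perp|\,j$, a linear combination of the line tight sets of Lemma \ref{lemma:singular_line_tightset} and of $j$, so membership in $\langle j\rangle\perp V_+$ is automatic and only the values on the four classes need evaluating. I spot-checked the key ingredients: the identities $|H_{s-1}|-X=q^{s-1}$ and $|H_{s-1}^\perp|-Y=q^{2d-3-s}$ hold, the correction term does force $\psi'$ to vanish on the mixed class $M$, and in the case $d=3$, $s=2$ one gets $X=Y=1$, $\psi'|_{H_s}=q(q+1)(q^2-1)$, $\psi'|_{C(H_s)}=-q(q+1)$, which is indeed in the ratio $(1-q^2):1$; the weight computation also checks out there. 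What your route buys is conceptual economy (the tight-set property comes for free from closure under linear combinations, and you learn in addition that the vector lies in the span of line tight sets) at the price of the final proportionality check; the paper's route avoids that check but needs more, and harder, intersection numbers. Two minor remarks: you do not actually need $M$ to be a single orbit --- constancy of $\psi'$ on $M$ already follows from your count of transversals inside $P^\perp$, which depends only on the class of $P$ --- and you should note explicitly that every transversal is totally isotropic (true, since $H_s\perp H_s^\perp$), so that Lemma \ref{lemma:singular_line_tightset} applies to it with subspace dimension $1<d-1$, using $d>2$.
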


\begin{proof}
  By Lemma \ref{lemma:intersection_tight_set_ovoid}, Proposition
  \ref{propn:characterisation_by_intersection}, and Lemma \ref{example:std_ovoid}, we only have to
  show
  \begin{align}
    ((1-q^{2d-2}) \chi_{\{P\}} + \chi_{P^\sim}) ((1-q^{2d-2-s}) \chi_{H_s} + (1-q^{s}) \chi_{H_s^\perp} + \chi_{C(H_s)})^\top \label{eq:h2tightset_claim}\\
    = (q^s-1)(q^{2d-2-s}-1)\frac{q^{2d-2}-1}{q^2-1}.\notag
  \end{align}
  for all points $P \in \h(2d-1, q^2)$. We have only four possible choices for $P$: that is, $P \in
  H_s$, $P \in H_s^\perp$, $P \in C(H_s)$, or $P \notin H_s \cup H_s^\perp \cup C(H_s)$.

  If $P \in H_s$, then
  \begin{align*}
    |P^\sim \cap H_s| &= q^2 |\h(s-2, q^2)| = q^2 \frac{(q^{s-1}+1)(q^{s-2}-1)}{q^2-1},\\
    |P^\sim \cap H_s^\perp| &= |\h(2d-2-s, q^2)| = \frac{(q^{2d-1-s}+1)(q^{2d-2-s}-1)}{q^2-1},\\
    |P^\sim \cap C(H_s)| &= (q^2-1)|P^\sim \cap H_s^\perp| (1+|P^\sim \cap H_s|) \\
    &= (q^{2d-1-s}+1)(q^{2d-2-s}-1) \left(1 + q^2 \frac{(q^{s-1}+1)(q^{s-2}-1)}{q^2-1}\right).
  \end{align*}
  Hence,
  \begin{align*}
    &((1-q^{2d-2}) \chi_{\{P\}} + \chi_{P^\sim}) ((1-q^{2d-2-s}) \chi_{H_s} + (1-q^{s}) \chi_{H_s^\perp} + \chi_{C(H_s)})^\top\\
    &= (1-q^{2d-2}) (1-q^{2d-2-s}) + (1-q^{2d-2-s}) |P^\sim \cap H_s| + (1-q^{s}) |P^\sim \cap H_s^\perp| + |P^\sim \cap C(H_s)|\\
    &= (q^s-1)(q^{2d-2-s}-1)\frac{q^{2d-2}-1}{q^2-1}.
  \end{align*}
  If $P \in H_s^\perp$, then the intersection numbers are the same as in the case $P \in H_s$ only
  with $(H_s, s)$ replaced by $(H_s^\perp, 2d-2-s)$. If $P \in C(H_s)$, then
  \begin{align*}
    |P^\sim \cap H_s| &= 1 + q^2 |\h(s-2, q^2)| = 1 + q^2 \frac{(q^{s-1}+1)(q^{s-2}-1)}{q^2-1},\\
    |P^\sim \cap H_s^\perp| &= 1 + q^2 |\h(2d-4-s, q^2)| = 1 + q^2 \frac{(q^{2d-3-s}+1)(q^{2d-4-s}-1)}{q^2-1},\\
    |P^\sim \cap C(H_s)|
    &= (|P^\sim \cap H_s| \cdot |P^\sim \cap H_s^\perp|-1) + (|\h(s, q^2)| - |P^\sim \cap H_s|)(|\h(2d-s-2, q^2)| - |P^\sim \cap H_s^\perp|).
  \end{align*}
  This shows \eqref{eq:h2tightset_claim} in this case. If $P \in C(H_s)$, then $|P^\sim \cap H_s| =
  |\h(s-1, q^2)|$, $|P^\sim \cap H_s^\perp| = |\h(2d-s-3, q^2)|$, and $|P^\sim \cap C(H_s)| =
  |\h(s-1, q^2)| \cdot |\h(2d-s-3, q^2)|$. Again, this affirms \eqref{eq:h2tightset_claim}.
\end{proof}

\begin{corollary}\label{corollary:hermitian_ovoid_h2bound}
  Let $d > 2$. Let $H_s$ be a subgeometry of $\h(2d-1, q^2)$ isomorphic to $\h(s, q^2)$, $1 < s <
  2d-2$, $s$ even. Let $\cO$ be an ovoid of $\h(2d-1, q^2)$. Then
 \begin{align*}
    |H_s \cap \cO| \le q^{s+1} - q^{s} + q^{2s-2d+2} + 1.
 \end{align*}
\end{corollary}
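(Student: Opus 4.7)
The plan is to extract the bound as a one-line algebraic consequence of Proposition \ref{propn:h2tightset}, by pairing its weighted tight set against the characteristic vector of the ovoid via Lemma \ref{lemma:intersection_tight_set_ovoid} and then eliminating the unwanted intersection numbers.

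First, I would note that $|\cO| = q^{2d-1}+1$ coincides with the value $n\negev/(\negev-k)$ for $\h(2d-1,q^2)$, so $\chi_\cO$ is a weighted $1$-ovoid. Writing
\[
\psi = (1-q^{2d-2-s})\chi_{H_s} + (1-q^{s})\chi_{H_s^\perp} + \chi_{C(H_s)}
\]
for the weighted $(q^s-1)(q^{2d-2-s}-1)$-tight set from Proposition \ref{propn:h2tightset}, Lemma \ref{lemma:intersection_tight_set_ovoid} gives
\[
(1-q^{2d-2-s})|H_s \cap \cO| + (1-q^{s})|H_s^\perp \cap \cO| + |C(H_s) \cap \cO| = (q^s-1)(q^{2d-2-s}-1).
\]

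Next, I would invoke that $H_s$, $H_s^\perp$, and $C(H_s)$ partition the points of $\h(2d-1,q^2)$: since $H_s$ is non-degenerate, its ambient projective subspace $\Pi$ and its polar $\Pi^\perp$ are complementary in $\pg(2d-1,q^2)$, so every Hermitian point lies either in $\Pi$, in $\Pi^\perp$, or in neither (which is precisely the definition of $C(H_s)$ used in the proof of Proposition \ref{propn:h2tightset}). Consequently
\[
|H_s \cap \cO| + |H_s^\perp \cap \cO| + |C(H_s) \cap \cO| = q^{2d-1}+1,
\]
and substituting for $|C(H_s) \cap \cO|$ in the previous identity simplifies (after collecting the terms in $|H_s \cap \cO|$ and $|H_s^\perp \cap \cO|$) to
\[
q^{2d-2-s}|H_s \cap \cO| + q^{s}|H_s^\perp \cap \cO| = q^{2d-1} - q^{2d-2} + q^{s} + q^{2d-2-s}.
\]

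Finally, I would drop the non-negative summand $q^{s}|H_s^\perp \cap \cO|$ and divide through by $q^{2d-2-s}$, which yields exactly $|H_s \cap \cO| \le q^{s+1} - q^s + q^{2s-2d+2} + 1$. I expect no genuine obstacle here: the real work is contained in Proposition \ref{propn:h2tightset}, and the corollary is a direct application of the ovoid/tight-set duality together with the global count $|\cO| = q^{2d-1}+1$. The only minor point worth articulating carefully is the partition statement for $H_s$, $H_s^\perp$, and $C(H_s)$, which follows immediately from the non-degeneracy of the Hermitian subgeometry $H_s$.
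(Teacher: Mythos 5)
Your overall strategy --- pair the weighted tight set of Proposition~\ref{propn:h2tightset} with $\chi_{\cO}$ via Lemma~\ref{lemma:intersection_tight_set_ovoid} and then discard unwanted terms --- is exactly the paper's, and your final inequality is the right one. But the middle step contains a genuine error: $H_s$, $H_s^\perp$ and $C(H_s)$ do \emph{not} partition the points of $\h(2d-1,q^2)$. The set $C(H_s)$ appearing in Proposition~\ref{propn:h2tightset} is not ``all singular points outside $\Pi\cup\Pi^\perp$''; it is the set of singular points lying on a line joining a point of $H_s$ to a point of $H_s^\perp$ (other than the two endpoints) --- the proof of Proposition~\ref{propn:h2tightset} treats a fourth case $P\notin H_s\cup H_s^\perp\cup C(H_s)$ explicitly, and that class is nonempty: writing a singular point as $P=X+Y$ with $X\in\Pi$, $Y\in\Pi^\perp$, the form only forces $h(X,X)=-h(Y,Y)$, so $X$ and $Y$ need not themselves be singular. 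A quick check for $\h(5,4)$ with $s=2$: $|H_2|+|H_2^\perp|+|C(H_2)|=9+9+3\cdot 81=261$, whereas $|\h(5,4)|=693$; moreover only the cone definition gives the weight sum $-27-27+243=189$ required of a weighted $9$-tight set there. Consequently your displayed identity $q^{2d-2-s}|H_s\cap\cO|+q^{s}|H_s^\perp\cap\cO|=q^{2d-1}-q^{2d-2}+q^{s}+q^{2d-2-s}$ is false as stated: its left-hand side is missing the term $|D\cap\cO|$, where $D$ is the fourth class.

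The argument is easily repaired, and the repair essentially reproduces the paper's proof. The three sets are pairwise disjoint, so you may only assert $|H_s\cap\cO|+|H_s^\perp\cap\cO|+|C(H_s)\cap\cO|\le|\cO|=q^{2d-1}+1$; equivalently, the missing term $|D\cap\cO|$ enters your identity with coefficient $+1$, and discarding it together with $q^{s}|H_s^\perp\cap\cO|$ still yields the stated bound. The paper organises the same discarding slightly differently: it assumes $H_s\cap\cO\neq\emptyset$ (the bound being trivial otherwise), notes that then $H_s^\perp\cap\cO=\emptyset$ since every point of $H_s^\perp$ is collinear with every point of $H_s$ and an ovoid contains no collinear pair, and concludes from $|H_s\cap\cO|+|C(H_s)\cap\cO|\le|\cO|$.
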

\begin{proof}
  Let $\cO$ be an ovoid. We may assume that $H_s \cap \cO$ is non-empty. 
  Let $\chi_T$ be the weighted $(q^s-1)(q^{2d-2-s}-1)$-tight set of Proposition \ref{propn:h2tightset}.
  By Lemma \ref{lemma:intersection_tight_set_ovoid}, $  \chi_{\cO} \chi_T^\top = (q^s-1)(q^{2d-2-s}-1)$,
  and since $H_s^\perp$ is empty, 
  $
   \chi_{\cO} \chi_T^\top = (1-q^{2d-2-s}) |H_s \cap \cO| + |C(H_s) \cap H_s|.
  $
    Hence $|H_s \cap \cO| + |C(H_s) \cap \cO| = (q^s-1)(q^{2d-2-s}-1) + q^{2d-2-s} |H_s \cap \cO|$
  and
  \[
  (q^s-1)(q^{2d-2-s}-1) + q^{2d-2-s} |H_s \cap \cO| \le |\cO| = q^{2d-1}.
  \]
\end{proof}

Using Andreas Klein's arguments together with Corollary \ref{corollary:hermitian_ovoid_h2bound} for
$s=2$, one obtains:

\begin{theorem}\label{theorem:improvement_klein_hermitian}
  The polar space $\h(2d-1, q^2)$, with $d > q^3-q^2+2$, does not possess an ovoid.
\end{theorem}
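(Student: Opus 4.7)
The plan is to keep Klein's inductive counting argument from \cite{Klein2004} intact and simply feed in the sharper bound on $|H_2 \cap \cO|$ produced by Corollary \ref{corollary:hermitian_ovoid_h2bound} at the slot where Klein uses the trivial estimate $|H_2 \cap \cO| \le q^3+1$ (the Hermitian curve bound). The combinatorial scaffolding of Klein's argument already relates the rank $d$ to the maximum intersection of an ovoid with a Hermitian quadrangle in a linear fashion, so any additive saving in that intersection bound transfers directly to an additive saving in the threshold on $d$.

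First I would record the specialisation of Corollary \ref{corollary:hermitian_ovoid_h2bound} to $s=2$. This reads
\[
|H_2 \cap \cO| \;\le\; q^3 - q^2 + q^{6-2d} + 1
\]
for any subgeometry $H_2 \cong \h(2,q^2)$ of $\h(2d-1,q^2)$. Since the left hand side is a non-negative integer and $q^{6-2d}<1$ as soon as $d\ge 4$, this collapses to
\[
|H_2 \cap \cO| \;\le\; q^3 - q^2 + 1,
\]
which improves the trivial ovoid-of-$\h(2,q^2)$ bound by exactly $q^2$. For $d\le 3$ the theorem is vacuous because $q^3-q^2+2\ge 3$ already for $q\ge 2$, so only the range $d\ge 4$ needs to be analysed.

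Next I would reproduce Klein's induction. His set-up counts, for a hypothetical ovoid $\cO$ of $\h(2d-1,q^2)$, the incidences between points of $\cO$ and a suitable orbit of $\h(2,q^2)$-subgeometries (one may organise these through a fixed ovoid point or flag), getting a single inequality in which $d$ appears linearly on one side and the bound $|H_2 \cap \cO|\le q^3+1$ enters additively on the other. Substituting $q^3-q^2+1$ for $q^3+1$ at exactly that position and leaving the rest of the calculation untouched replaces the conclusion $d\le q^3+1$ by
\[
d \;\le\; q^3 - q^2 + 2,
\]
contradicting the hypothesis $d>q^3-q^2+2$.

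The main obstacle I expect is purely a bookkeeping one: one must verify that in Klein's chain of estimates the bound on $|H_2 \cap \cO|$ enters only through its additive contribution, so that the $q^2$ improvement is preserved rather than being absorbed by another lossy step (e.g.\ a divisibility rounding or a subsequent maximisation). Once this is checked, no new geometric input is needed beyond Proposition \ref{propn:h2tightset} and Corollary \ref{corollary:hermitian_ovoid_h2bound}, and the slicing Lemma \ref{lemma:slicing} handles the reduction to the smallest relevant rank.
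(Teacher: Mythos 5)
Your proposal matches the paper's own proof exactly: the paper establishes Theorem \ref{theorem:improvement_klein_hermitian} in precisely this way, by feeding the $s=2$ case of Corollary \ref{corollary:hermitian_ovoid_h2bound} into Klein's inductive argument as a stronger induction base, and it gives no more detail about the bookkeeping inside Klein's induction than you do. Your specialisation $|H_2 \cap \cO| \le q^3-q^2+1$ (using integrality and $q^{6-2d}<1$ for $d\ge 4$) and the observation that the statement is vacuous for small $d$ are both correct.
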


The same arguments yield a similar bound for $\q^+(2d-1, q)$, which was not considered in \cite{Klein2004}.

\begin{theorem}\label{theorem:improvement_klein_hyperbolic}
  The polar space $\q^+(2d-1, q)$, with $d > q^2-q+3$, does not possess an ovoid.
\end{theorem}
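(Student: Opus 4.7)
The proof follows the template of Theorem~\ref{theorem:improvement_klein_hermitian} verbatim, with $\q^+(2d-1,q)$ (type $e=0$) in place of the Hermitian polar space $\h(2d-1,q^2)$ (type $e=1/2$). I would first establish hyperbolic analogues of Proposition~\ref{propn:h2tightset} and Corollary~\ref{corollary:hermitian_ovoid_h2bound}, and then feed the improved intersection bound into Klein's induction from \cite{Klein2004}.

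For the first step, let $H_s$ be a subgeometry of $\q^+(2d-1,q)$ isomorphic to $\q^+(s,q)$ with $s$ odd and $1 < s < 2d-3$ (so that $H_s^\perp \cap \q^+(2d-1,q)$ is again a non-degenerate hyperbolic quadric). I would show that
\[
\alpha \chi_{H_s} + \beta \chi_{H_s^\perp} + \chi_{C(H_s)}
\]
is a weighted tight set for suitable integers $\alpha, \beta$ depending on $q, s, d$ (the natural ansatz, by analogy with Proposition~\ref{propn:h2tightset}, being of the form $\alpha = 1 - q^{\ast}$, $\beta = 1 - q^{\ast\ast}$). The verification parallels the Hermitian proof: by Proposition~\ref{propn:characterisation_by_intersection} applied to the standard point weighted ovoid $-(q^{d-1}-1)\chi_P + \chi_{P^\sim}$, it suffices to check that the inner product is independent of $P$, which one does case by case for $P \in H_s$, $P \in H_s^\perp$, $P \in C(H_s)$, and $P$ external. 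Each case reduces to computing $|P^\sim \cap H_s|$, $|P^\sim \cap H_s^\perp|$, and $|P^\sim \cap C(H_s)|$ using sizes of hyperbolic, parabolic, and degenerate cone quadrics. Pairing $\chi_\cO$ against the resulting tight set via Lemma~\ref{lemma:intersection_tight_set_ovoid} and using $|\cO| = q^{d-1}+1$ then gives the hyperbolic analogue of Corollary~\ref{corollary:hermitian_ovoid_h2bound}: an upper bound for $|H_s \cap \cO|$ that is polynomial in $q$ of degree less than $s$.

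Specialising to $s=3$, the smallest odd $s$ giving a non-trivial hyperbolic subquadric $\q^+(3,q)$ (this is the exact analogue of the $s=2$ choice in Theorem~\ref{theorem:improvement_klein_hermitian}), should yield a bound of the shape $|H_3 \cap \cO| \le q^2 - q + O(1)$ for $d$ sufficiently large. Feeding this into the inductive slicing scheme of Klein \cite{Klein2004} — which relies only on Lemma~\ref{lemma:slicing} together with bounds on how an ovoid can meet a small non-degenerate subquadric — then gives $d \le q^2 - q + 3$, contradicting $d > q^2 - q + 3$.

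The main obstacle will be the first step, namely pinning down the coefficients $\alpha, \beta$ and verifying the tight-set identity for the hyperbolic setting. The intersection $P^\perp \cap H_s$ now decomposes as a parabolic quadric $\q(s-1,q)$ for most positions of $P$, or as a cone $p\q^+(s-2,q)$ when $P \in C(H_s)$, and these have different point counts; aligning all four cases to produce a single constant inner product is delicate, although a finite case analysis once the correct ansatz is fixed. A secondary worry is that Klein's induction, originally developed for $\h(2d-1,q^2)$, must transfer cleanly to $\q^+(2d-1,q)$; but since it invokes only slicing and intersection bounds with small subquadrics, both of which are available here via Lemma~\ref{lemma:slicing} and the step just outlined, no structural difficulty is expected.
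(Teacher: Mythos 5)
Your overall template --- a weighted tight set of the form $\alpha\chi_{H}+\beta\chi_{H^\perp}+\chi_{C(H)}$, the resulting upper bound on $|H\cap\cO|$ via Lemma \ref{lemma:intersection_tight_set_ovoid}, and Klein's slicing induction --- is indeed what ``the same arguments'' refers to. The gap is in your choice of subgeometry. The correct analogue of the Hermitian curve $\h(2,q^2)$ is \emph{not} a hyperbolic subquadric $\q^+(s,q)$ with $s$ odd: note that $\h(2,q^2)$ is the \emph{rank one} Hermitian space of the opposite type ($e=3/2$ versus $e=1/2$ for the ambient space), i.e.\ a set of $q^3+1$ pairwise non-collinear points whose perp is again non-degenerate. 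Its orthogonal counterpart inside $\q^+(2d-1,q)$ is the \emph{elliptic} quadric $E\cong\q^-(3,q)$: its $q^2+1$ points are pairwise non-collinear (every line of $\langle E\rangle$ meets $E$ in at most two points, so no line joining two of them is totally singular), and $E^\perp\cong\q^-(2d-5,q)$ is again non-degenerate. Your choice $H_3\cong\q^+(3,q)$ is a grid whose ruling lines are totally singular lines of $\q^+(2d-1,q)$; consequently $\cO\cap H_3$ is a partial ovoid of the grid and $|H_3\cap\cO|\le q+1$ for trivial reasons. A bound of the shape $q^2-q+O(1)$ therefore cannot be what the grid produces, and, more importantly, Klein's induction --- which needs a rank-one section whose intersection with the successively sliced ovoids can climb all the way up to roughly $q^2-q+3$ before colliding with the tight-set bound --- cannot be run on a section that every ovoid meets in at most $q+1$ points.

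With $E\cong\q^-(3,q)$ the plan does go through as you describe: summing the weighted tight sets of Lemma \ref{lemma:singular_line_tightset} over the totally singular lines joining $E$ to $E^\perp$ and subtracting a suitable multiple of $j$ shows that
\[
(1-q^{d-3})\chi_{E}+(1-q)\chi_{E^\perp}+\chi_{C(E)}
\]
is a weighted $(q-1)(q^{d-3}-1)$-tight set of $\q^+(2d-1,q)$, and pairing it with $\chi_\cO$ (noting $E^\perp\cap\cO=\emptyset$ as soon as $E\cap\cO\neq\emptyset$) gives
\[
|E\cap\cO|\le q^2-q+1+q^{4-d},
\]
hence $|E\cap\cO|\le q^2-q+2$ at rank $4$. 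Feeding this cap into Klein's induction (each slice from a suitable point of $E^\perp$ increases $|E\cap\cO|$ by at least one, and $E^\perp$ still contains singular points down to rank $4$) yields $d\le q^2-q+3$, which is exactly the theorem. So the repair is localised --- replace $\q^+(s,q)$ by the elliptic quadric $\q^-(3,q)$ and redo the four-case intersection count --- but as written your key step selects the wrong subgeometry, and the intermediate bound you claim for it is not correct.
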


The best known bounds on ovoids of $\h(2d-1, q^2)$ or $\q^+(2d-1, q)$ are due to Blokhuis and
Moorhouse \cite{Blokhuis1995}. In contrast to the results here their proof is purely algebraic and
gives no information on the hypothetical structure of an ovoid. Similar arguments also give the
following existence conditions on ovoids of parabolic quadrics, but in this case better geometric
results are known \cite{Ball2006, GM:1997, Thas:1981kx}.

\begin{theorem}\label{theorem:improvement_klein_parabolic2}
  The polar space $\q(2d, q)$, with $d > \frac{q^2+3}{2}$, does not possess an ovoid.
\end{theorem}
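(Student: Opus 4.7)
The plan is to mirror, inside the parabolic quadric $\q(2d, q)$, the three-step strategy that led to Theorem \ref{theorem:improvement_klein_hermitian}: construct a weighted tight set supported on a non-degenerate subquadric together with its perp, extract an intersection bound for a hypothetical ovoid, and then feed this bound into Klein's inductive counting from \cite{Klein2004}.

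First I would prove a parabolic analog of Proposition \ref{propn:h2tightset}. Let $H_s$ be a non-degenerate subquadric of $\q(2d, q)$ chosen so that $H_s^\perp$ is again non-degenerate. The cleanest choice is $H_s \cong \q^-(s, q)$ with $s$ odd, so that $H_s^\perp \cong \q(2d-s-1, q)$ is again parabolic (both of type $e \in \{1,2\}$ in the table of Section 3, so all parameters from the strongly regular graph table are available). Imitating Proposition \ref{propn:h2tightset}, I would solve for scalars $\alpha, \beta$ making
\[
\alpha \chi_{H_s} + \beta \chi_{H_s^\perp} + \chi_{C(H_s)}
\]
a weighted tight set. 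By Proposition \ref{propn:characterisation_by_intersection}, this amounts to verifying that the inner product of this vector with the weighted ovoid $-(q^{d-1}-1)\chi_{\{P\}} + \chi_{P^\sim}$ from Example \ref{example:std_ovoid} takes the same value as $P$ runs over the orbits of the stabiliser, namely $P \in H_s$, $P \in H_s^\perp$, $P \in C(H_s)$, or $P$ outside all three. The scalars and the weight $mi$ are pinned down by the resulting small linear system.

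Second, Lemma \ref{lemma:intersection_tight_set_ovoid} applied to a hypothetical ovoid $\cO$ and this weighted tight set yields, exactly as in Corollary \ref{corollary:hermitian_ovoid_h2bound}, a bound $|H_s \cap \cO| \le f_s(d, q)$ whose dominant term is polynomial in $q$. Taking $s$ to be as small as the analog permits and substituting $f_s$ into Klein's inductive argument of \cite{Klein2004}, which only uses such an intersection estimate as its input, then produces the threshold $d > (q^2+3)/2$.

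The main obstacle is Step 1. In contrast to the Hermitian case, where $H_s^\perp$ is automatically a Hermitian subgeometry of the same type, the perp of a subquadric of a parabolic quadric is sensitive to the parity of $s$ and to the discriminant of the form: $H_s \cong \q(s,q)$ with $s$ even produces an elliptic or hyperbolic $H_s^\perp$ rather than a parabolic one, and the tight-set computation has to be redone for each such case. One must therefore either restrict attention to parities that keep the types compatible with the parameter row used in Example \ref{example:std_ovoid}, or else handle each subcase of $H_s^\perp$ separately in the linear system for $\alpha, \beta$. Additional care is needed in characteristic two, where $\q(2d, q) \cong \w(2d-1, q)$ and the perp is defined only via the associated (possibly degenerate) symplectic form, but this is consistent with the type $e = 1$ row of the parameter table and does not change the structure of the argument.
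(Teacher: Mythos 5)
Your proposal follows exactly the route the paper intends: the paper offers no written proof of this theorem, merely asserting that the tight-set construction of Proposition \ref{propn:h2tightset}, the intersection bound of Corollary \ref{corollary:hermitian_ovoid_h2bound}, and Klein's induction apply ``similarly'' to $\q(2d, q)$, and your three steps are precisely that adaptation, including a correct identification of the parity/discriminant issues for $H_s^\perp$. The one item you leave unverified --- that the bound obtained from your chosen section, fed into Klein's induction, really produces the threshold $d > \frac{q^2+3}{2}$ rather than some other constant (the factor $\frac{1}{2}$, absent from the Hermitian and hyperbolic thresholds, reflects that perping a non-singular point of a parabolic quadric alternates between odd- and even-dimensional orthogonal sections and so drops the rank only every other step) --- is equally unverified in the paper itself, so this is not a gap relative to the paper's own argument.
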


\appendix

\section{The weighted tight set used for $\h(5, 4)$}

We use the Hermitian form $x_1^3+x_2^3+x_3^3+x_4^3+x_5^3+x_6^3$.
Let $\{ 0, 1, a, a^2\}$ be the elements of $\gf(4)$.


Let $U_1$
be the setwise stabiliser of the points spanned by the following eight vectors:
\begin{align*}
&(0,1,1,0,a,a),&(1,a^2,1,0,0,a),\\
&(1,0,0,1,a^2,a),&(0,0,1,a,a,a^2),\\
&(1,a^2,1,0,0,1),&(1,a^2,a^2,a,a,1),\\
&(1,1,0,1,0,1),&(1,1,a,a,1,a^2).
\end{align*}


The group $U$ described in Subsection \ref{sec:h54} is isomorphic to $U_1 \cap \mathsf{PGU}(6,2)$.
The group has $34$ point orbits, here given by their representatives.
\begin{align*}
  &(1,1,0,0,0,0) &&(0,0,0,0,1,1)\\
  &(0,0,0,0,1,a) &&(0,0,0,0,1,a^2)\\
  &(0,0,0,1,0,1) &&(0,0,0,1,0,a^2) \\
  &(0,0,0,1,1,0) &&(0,0,0,1,a,0)\\
  &(0,0,0,1,a^2,0) &&(0,0,1,0,0,1)\\
  &(0,0,1,0,0,a^2) &&(0,0,1,0,a^2,0)\\
  &(0,0,1,1,1,1) &&(0,0,1,1,1,a)\\
  &(0,0,1,1,1,a^2) &&(0,0,1,a,a,1)\\
  &(0,0,1,a,a,a) &&(0,0,1,a,a,a^2)\\
  &(0,0,1,a,a^2,1) &&(0,0,1,a,a^2,a)\\
  &(0,0,1,a^2,0,0) &&(0,0,1,a^2,a^2,1)\\
  &(0,0,1,a^2,a^2,a) &&(0,1,0,1,a,1)\\
  &(0,1,0,a,0,0) &&(0,1,0,a,a,1)\\
  &(0,1,0,a,a,a^2) &&(0,1,0,a^2,1,a^2)\\
  &(0,1,a,1,a^2,0) &&(0,1,a,a,1,0)\\
  &(1,0,0,1,a^2,a) &&(1,0,a,0,1,a^2)\\
  &(1,0,a^2,a,0,a^2) &&(1,a^2,1,0,0,a)
\end{align*}
%
%
%
%

Put the following weights on these orbits (with the same order) to obtain a weighted $36$-tight set.
\begin{align*}
 (5, 0, -2, 0, -7, 3, 16, 0, 2, 0, 10, 0, 0, 0, 24, 12, 0, 36, 0, 0,\\ -9, 6, -6, -6, 0, 0, 2, -8, 6, 24, -12, 0, 0, 0)
\end{align*}
In the notation of Subsection \ref{sec:h54}, $\ell$ is the 30th orbit, $P$ is the
34th orbit, and the $9$ remaining totally isotropic points of $\langle P, \ell \rangle$
are in the 14th, 31st and 33rd orbit. The assumption says that there are $3$ non-collinear points of the ovoid in
the 14th orbit. All orbits except $2, 4, 8, 10, 12, 13, 14, 15, 16, 18, 19, 25, 26, 32$
contain no point of the ovoid, since their points are in the perp of one point
of the 14th orbit. 
The orbits $16, 18, 25, 32$
are empty, since they are subsets of $W_5$ and all $3$ points of the ovoid in $W_5$
are in the 14th orbit. Let $O_i$ denote the $i$-th point orbit. Hence,
\begin{align*}
  24 |O_{15} \cap \cO| = 36.
\end{align*}
This is a contradiction, since $|O_{15} \cap \cO|$ is an integer.


\end{document}